\documentclass[twocolumn, switch]{article} 

\usepackage{preprint}
\usepackage{algorithm}
\usepackage{algorithmic}
\usepackage{amsmath, amsthm, amssymb, amsfonts}

\usepackage[numbers,square]{natbib}
\usepackage[utf8]{inputenc}	
\usepackage[T1]{fontenc}	
\usepackage{xcolor}		
\usepackage[colorlinks = true,
            linkcolor = purple,
            urlcolor  = blue,
            citecolor = cyan,
            anchorcolor = black]{hyperref}	
\usepackage{booktabs} 		
\usepackage{nicefrac}		
\usepackage{microtype}		
\usepackage{lineno}		
\usepackage{float}			

\usepackage{lipsum}		

\usepackage{newfloat}
\usepackage{xcolor}
\usepackage{mathtools, amssymb, amsfonts, dsfont, nccmath}
\usepackage{microtype}
\usepackage{subcaption}
\usepackage{algorithmic}
\usepackage{graphicx}
\usepackage{textcomp}
\usepackage{xcolor}
\usepackage{amsthm} 
\usepackage{xcolor}

\newtheorem{lemma}{Lemma}
\newtheorem{theorem}{Theorem}
\theoremstyle{definition}
\newtheorem{assumption}{Assumption}
\newtheorem{definition}{Definition}
\newtheorem{remark}{Remark}
\newtheorem{proposition}{Proposition}

\newtheorem{example}{Example}
\DeclareFloatingEnvironment[name={Supplementary Figure}]{suppfigure}
\usepackage{sidecap}
\sidecaptionvpos{figure}{c}

\usepackage{titlesec}
\titlespacing\section{0pt}{12pt plus 3pt minus 3pt}{1pt plus 1pt minus 1pt}
\titlespacing\subsection{0pt}{10pt plus 3pt minus 3pt}{1pt plus 1pt minus 1pt}
\titlespacing\subsubsection{0pt}{8pt plus 3pt minus 3pt}{1pt plus 1pt minus 1pt}

\title{Mean Field Games and Control on Large Expander Graphs}

\usepackage{eso-pic}
\usepackage{tikz}
\usepackage{xcolor}
\PassOptionsToPackage{colorlinks=true,linkcolor=gray,urlcolor=gray}{hyperref}

\newcommand{\AddMyWatermarks}{%
  \begin{tikzpicture}[remember picture, overlay]
    \node[color=gray!90, scale=1] at ([xshift=0in,yshift=-5in]current page.center) {%
      This manuscript has been submitted to the 65th IEEE Conference on Decision and Control (CDC) 2026.%
    };
  \end{tikzpicture}%
}

\AddToShipoutPictureBG*{\AddMyWatermarks}

\usepackage{titling}
\usepackage{orcidlink}
\usepackage{footmisc}
\newcommand{\Author}[3]{
  \textbf{#1}\textsuperscript{#2},\ \orcidlink{#3} %
}

\author{
  \Author{Tao Zhang}{1}{0000-0001-6335-7902} \and
  \Author{Peter E. Caines}{1}{}
}

\date{%
  \textsuperscript{1}Department of Electrical and Computer Engineering, McGill University
}

\begin{document}

\twocolumn[ 
  \begin{@twocolumnfalse} 

\maketitle
\thispagestyle{empty}

\begin{abstract}
This paper investigates mean field games and control on sparse networks. In the case of large expander graphs, the limit topologies are analyzed using the graphexon framework, which characterizes both dense network limits and sparse connections. We prove that the sequence of empirical graphexon measures defined on finite graphs converges weakly to a limit graphexon measure on a continuous state space. Furthermore, the associated sequence of discrete averaging operators converges strongly to a continuous operator. These properties enable the formulation of a linear-quadratic mean field game in which each agent is identified by a spatial network label $\alpha \in X$ and only interacts with the neighborhood average defined by the operator $\mathcal{G}$ characterized by large expander graphs. In Section \ref{sec:mfg}, algebraic conditions for the global asymptotic stability of the closed-loop system are established. The analysis identifies parameter thresholds that gives rise to a Turing-type topological instability, where the homogeneous mean state remains stable while the spatial deviation field diverges over the continuous spectrum of the limit operator.
\end{abstract}
\vspace{0.35cm}

  \end{@twocolumnfalse} 
] 



\section{Introduction}

Mean Field Game (MFG) theory, introduced by Lasry and Lions \cite{lasry2006jeux, lasry2007mean} and Huang, Caines, and Malhamé  \cite{huang2006large}, provides a method to analyze large decision systems of interacting agents. By assuming exchangeability, the theory approximates the interaction between an individual and the population using a deterministic mean field. Traditional MFGs assume uniform, all-to-all interactions. To model heterogeneous network topologies, the graphon MFG framework was developed, replacing the uniform mean-field coupling with a limit object defined by a graphon \cite{caines2021graphon, gao2020linear}. A graphon represents the limit of a sequence of dense graphs where the number of edges scales with the population size $N$ \cite{lovasz2012large}.

A structural limitation of the standard graphon approach is its inability to describe sparse networks. Under the dense graphon scaling, the limit of a sparse graph sequence degenerates to zero, losing the topological information. To analyze limits of sparse graphs, some methods are developed. For example, Borgs et al. developed the theory of $L^p$ graphons and graphings \cite{borgs2018sparse}, focusing on bounded-degree graph sequences \cite{veitch2015class}. In parallel, the concept of local weak convergence (or Benjamini-Schramm convergence) provides a probabilistic framework for analyzing the local neighborhood structure of sparse graph sequences and their application to stochastic games \cite{benjamini2001recurrence, lacker2023local}. To provide a unified methodology handling both dense and sparse network limits, the graphexon and the Laplexion framework was introduced \cite{caines2022embedded, caines2024sparse, caines2025cdc_mean,zhang2026mean}. A graphexon encodes the asymptotic edge distribution as a symmetric Borel measure on a product space.
Specifically, the \textit{graphexon measure} $\mathbf{G}$ admits a decomposition into an absolutely continuous component and a singular component \cite{caines2024sparse}:
\begin{equation}\label{eq:graphexon}
    \mathbf{G}(dx, dy) = g(x, y) dx dy + \mathbf{N}(dx, dy).
\end{equation}
In this representation, the term $g(x, y) dx dy$ is the absolutely continuous part corresponding to the dense graphon limit, while $\mathbf{N}(dx, dy)$ is the singular measure that captures the sparse network links.

In the context of large-scale engineering systems, the deployment of algebraically constructed graph sequences, such as Schreier and Cayley graphs, addresses the trade-off between communication sparsity and global connectivity. Functioning as optimal expander graphs, these algebraic structures possess a strictly bounded uniform degree while maintaining a large spectral gap \cite{hoory2006expander}. The theoretical pinnacle of such constructions is realized by Ramanujan graphs, which achieve the asymptotically optimal spectral gap permitted by the Alon-Boppana bound \cite{lubotzky1988ramanujan, margulis1988explicit}. In distributed control, this optimal algebraic connectivity (or Fiedler value) is paramount: it dictates the convergence rate of multi-agent algorithms, enabling ultrafast state consensus and robust information diffusion without the prohibitive $\mathcal{O}(N^2)$ communication burden inherent to dense topologies \cite{olfati2005ultrafast}.

Because these algebraic topologies dictate logical routing rather than physical spatial proximity, they are widely deployed as abstract overlay networks or rigid hardware interconnections in large-scale engineering. Classical implementations have utilized Cayley graphs to design symmetric, deadlock-free interconnection architectures for multiprocessor systems \cite{akers2002group}, as well as to construct scalable routing geometries in decentralized peer-to-peer networks \cite{rowstron2001pastry}. More recently, the deterministic high-throughput properties of expander graphs have been physically applied in modern cloud data centers, such as the Margulis-expander-based Xpander architecture \cite{valadarsky2016xpander}. 

The framework of this paper addresses the limit problem of MFGs in sparse networks. We first construct the finite graph topology using a discrete approximation of a group action, utilizing Gabber-Galil-Margulis expander graphs \cite{gabber1981explicit, hoory2006expander,lubotzky2012expander}. Expander graphs are connected sparse graphs that maintain bounded degrees while exhibiting specific algebraic mixing properties \cite{hoory2006expander}. {In contrast to dense networks where the mean field interaction is captured by an integral operator over the state space (e.g., $\int_X g(\alpha, \beta) m(\beta) \nu(d\beta)$), the sparse limit topology in this paper is characterized by a graphexon operator $\mathcal{G}$ defined in Equation \eqref{eq:graphexon_operator} which acts via measure-preserving spatial shifts rather than integration, dictating the spatial coupling in the agent dynamics.}

The primary contribution of this work is extending Lubotzky's algebraic framework for finite expander graphs to the continuous graphexon limit. Building upon this framework, the paper extends the study of Mean Field Games (MFGs) on sparse networks in two directions. First, the weak convergence of empirical measures for the underlying finite graph structure to a limit graphexon measure is established, and the strong $L^2$-convergence of the associated discrete averaging operators to an ergodic continuous integration operator is proved. Second, the infinite-horizon discounted scalar linear-quadratic-Gaussian (LQG) MFG on this limit space is formulated, yielding a coupled system of forward-backward differential equations. The existence of a stabilizing decoupling operator and the exact algebraic boundaries for the stability is established, characterizing the Turing-type topological instability phenomenon.

The paper is organized as follows. Section \ref{sec:finite_graph} introduces finite Schreier expander graphs and discrete graphexons. Section \ref{sec:weak_convergence} addresses continuous graphexon limits and the associated operator convergence. Section \ref{sec:mfg} formulates the infinite-horizon LQG graphexon mean field game and establishes the algebraic conditions for mean field stability and topological instability. Section \ref{sec:simulation} presents numerical simulations, followed by concluding remarks in Section \ref{sec:conclusion}.

\section{Finite Schreier Expander Graphs and Discrete Graphexons}\label{sec:finite_graph}

In this work, the connectivity of the finite graphs is generated by a discrete approximation of a group action, constructing a sequence of Schreier graphs \cite{hoory2006expander}.

Let $(X, d)$ be a compact metric space equipped with its Borel $\sigma$-algebra $\mathcal{B}$ and a Borel probability measure $\nu$, forming the probability space $(X, \mathcal{B}, \nu)$. Let $\|\cdot\|_{L^2}$ be the induced $L^2$-norm. Denote by $L^2_0(X, \nu)$ the orthogonal complement of the constant functions in $L^2(X, \nu)$. Let $\|\cdot\|_{\mathcal{L}(L^2_0)}$ denote the induced operator norm restricted to $L^2_0(X, \nu)$. Let $N \in \mathbb{N}$ denote the resolution parameter. 

Let $\Gamma$ be an infinite discrete group generated by a finite symmetric set $S = \{s_1, \dots, s_K\}$. Let $\{\Gamma_N\}_{N=1}^\infty$ be a sequence of finite-index subgroups of $\Gamma$. The index set representing the finite population is defined as the left coset space $X_N \triangleq \Gamma / \Gamma_N$.

Let $M_N \triangleq |X_N|$ denote the cardinality of the index set. We assume $M_N \to \infty$ as $N \to \infty$. Let $\ell^2(X_N)$ be the space of real-valued functions on the index set equipped with the normalized inner product $\langle \phi, \psi \rangle_{\ell^2} = \frac{1}{M_N} \sum_{v \in X_N} \phi(v)\psi(v)$, and $\ell^2_0(X_N)$ be the subspace of functions orthogonal to the constant functions. 

Let $\mathcal{P}_N = \{Q_v^N  \subset X\}_{v \in X_N}$ be a measurable partition of the space $X$ such that the measure of each cell is uniform, satisfying
\[
    \nu(Q_v^N) = \frac{1}{M_N}, \quad \forall v \in X_N.
\]
We assume that the maximum diameter of the partition cells, denoted by $\delta_N$, vanishes as $N \to \infty$ satisfying
\[
    \delta_N \triangleq \max_{v \in X_N} \sup_{x, y \in Q_v^N} d(x, y), \quad \lim_{N \to \infty} \delta_N = 0.
\]

Let $\iota_N: X_N \to X$ be an embedding map that assigns a representative point to each index, satisfying $\iota_N(v) \in Q_v^N$. 
For any $x \in X$, let $i_N:X \to X_N$ denote the unique index such that $x \in Q_{i_N(x)}^N$.

Denote a sequence of $K$ bijections on the index set by
\begin{equation}
    S_N = (\sigma_{N, 1}, \dots, \sigma_{N, K})
\end{equation}
where each $\sigma_{N, k}$ is defined by the left action of the generators in $S$ on the coset space $X_N$
\[
   \sigma_{N,k}(g\Gamma_N) \triangleq (s_k g)\Gamma_N
\]
for any coset $g\Gamma_N \in X_N$ with $g \in \Gamma$.

\begin{example}\label{ex:discrete_torus}
Consider the state space $X = \mathbb{T}^2 \cong \mathbb{R}^2 / \mathbb{Z}^2$. Let $\tilde{\Gamma}$ be the subgroup of $\text{SL}(2, \mathbb{Z})$ generated by the matrices
\[
    T_1 = \begin{pmatrix} 1 & 2 \\ 0 & 1 \end{pmatrix}, \quad 
    T_2 = \begin{pmatrix} 1 & 0 \\ 2 & 1 \end{pmatrix}.
\]
Define the infinite discrete group $\Gamma \triangleq \mathbb{Z}^2 \rtimes \tilde{\Gamma}$ equipped with the group operation $(z_1, \tilde{T}_1)(z_2, \tilde{T}_2) = (z_1 + \tilde{T}_1 z_2, \tilde{T}_1 \tilde{T}_2)$. Define the sequence of finite-index subgroups $\Gamma_N \triangleq (N\mathbb{Z})^2 \rtimes \tilde{\Gamma}$. Let $\{e_1, e_2\}$ denote the standard basis for $\mathbb{Z}^2$. Define the symmetric generating set $S = \{s_1, \dots, s_8\} \subset \Gamma$ of size $K=8$ by 
\begin{align*}
    s_1 &= (0, T_1), & s_2 &= (e_1, T_1), \\
    s_3 &= (0, T_2), & s_4 &= (e_2, T_2), \\
    s_5 &= (0, T_1^{-1}), & s_6 &= (-T_1^{-1}e_1, T_1^{-1}), \\
    s_7 &= (0, T_2^{-1}), & s_8 &= (-T_2^{-1}e_2, T_2^{-1}).
\end{align*}
The finite coset space $X_N \triangleq \Gamma / \Gamma_N$ is bijectively identified with the discrete index set $(\mathbb{Z}/N\mathbb{Z})^2$ with cardinality $M_N = N^2$. The embedding map $\iota_N: X_N \to \mathbb{T}^2$ assigns coordinates on the torus by $\iota_N(v) = \frac{v}{N} \bmod 1$. The discrete graph topology is governed by the sequence of affine bijections $S_N = (\sigma_{N, 1}, \dots, \sigma_{N, 8})$ acting on $X_N$ given by
\begin{align*}
    \sigma_{N, 1}(v) &= T_1 v \bmod N, & \sigma_{N, 2}(v) &= (T_1 v + e_1) \bmod N, \\
    \sigma_{N, 3}(v) &= T_2 v \bmod N, & \sigma_{N, 4}(v) &= (T_2 v + e_2) \bmod N,\\
    \sigma_{N, 5}(v) &= T_1^{-1} v \bmod N, & \sigma_{N, 6}(v) &= T_1^{-1}(v - e_1) \bmod N, \\
    \sigma_{N, 7}(v) &= T_2^{-1} v \bmod N, & \sigma_{N, 8}(v) &= T_2^{-1}(v - e_2) \bmod N.
\end{align*}
\end{example}

\begin{definition}[Property $(\tau)$ \cite{lubotzky1994discrete}]\label{def:property_tau}
The group $\Gamma$ has \textit{Property} $(\tau)$ with respect to $\{\Gamma_N\}$ if there exists a uniform constant $\kappa > 0$ such that for any $N \ge 1$ and any $f \in \ell^2_0( X_N )$, the following bound holds:
\[
    \sum_{k=1}^K \|f \circ \sigma_{N, k} - f\|_{\ell^2}^2 \ge \kappa^2 \|f\|_{\ell^2}^2.
\]
where $\kappa$ is termed a \textit{Kazhdan constant}.
\end{definition}
\begin{assumption}\label{assum:property_tau}
    It is assumed that $\Gamma$ has Property $(\tau)$ with respect to $\{\Gamma_N\}$.
\end{assumption}

The topology on the edge space of the large-scale multi-agent mean field system is characterized by an undirected $K$-regular graph $G_N = (X_N, E_N)$. The edge set $E_N$ is defined by
\[
    E_N \triangleq \left\{ \{v, \sigma_{N, k}(v)\} \mid v \in X_N, \, k \in \{1, \dots, K\} \right\}.
\]
\begin{remark}
    $G_N$ constitutes a \textit{finite Schreier graph} associated with the action of the group $\Gamma$ on the coset space $\Gamma / \Gamma_N$ generated by the symmetric set $S$. Schreier graphs establish a natural algebraic generalization of Cayley graphs. Specifically, in the degenerate case where $\Gamma_N$ is the trivial identity subgroup, the coset space coincides with the group $\Gamma$ itself, and the Schreier graph reduces to the Cayley graph of $\Gamma$ with respect to $S$ (see, e.g., \cite{hoory2006expander, lubotzky2012expander}).
\end{remark}
\begin{figure}[t!]
    \centering
    \begin{subfigure}[b]{0.48\linewidth}
        \centering
        \includegraphics[width=\linewidth]{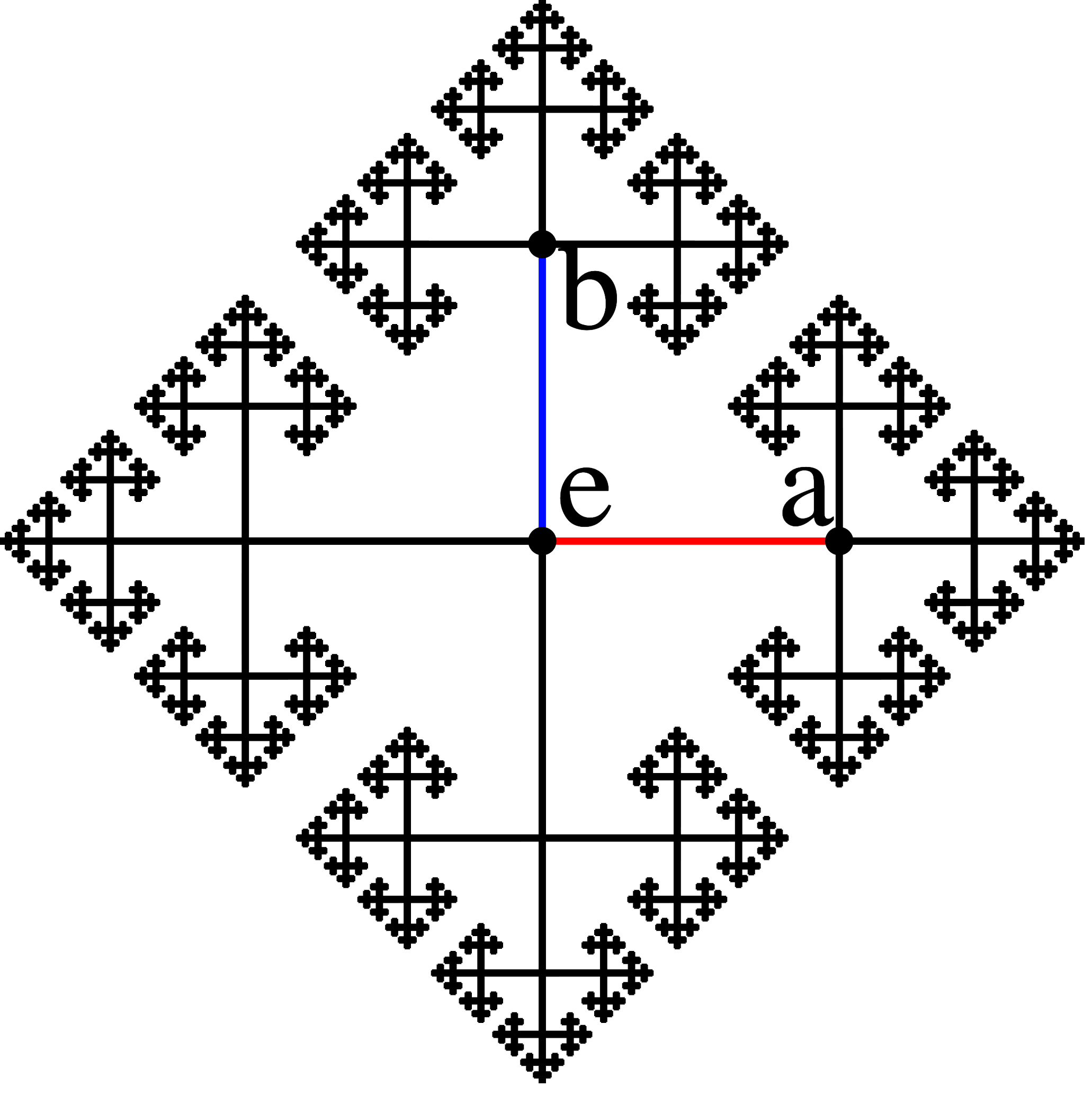}
        \caption{}
        \label{fig:cayley_graph}
    \end{subfigure}
    \hfill 
    \begin{subfigure}[b]{0.48\linewidth}
        \centering
        \includegraphics[width=0.7\linewidth]{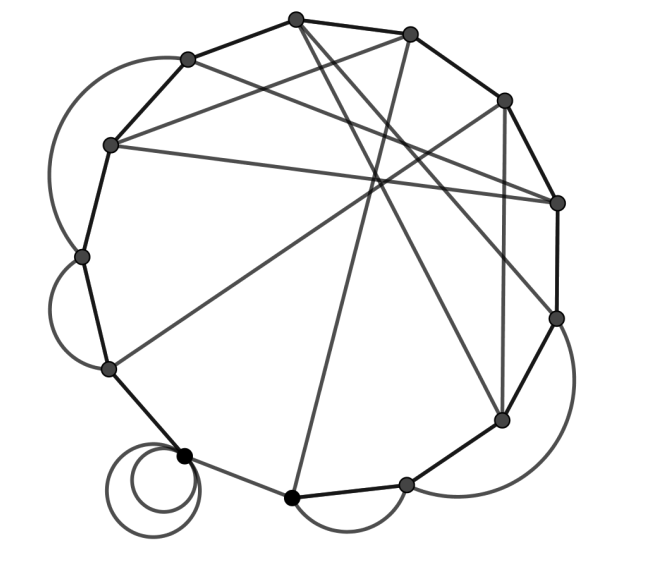}
        \caption{}
        \label{fig:schreier_graph}
    \end{subfigure}
    \caption{(a) A Cayley Graph with two generators ($a,b$) (Image source: Wikipedia); (b) A Schreier graph of the affine group acting on a finite field with 13 elements, picture from \cite{sabatini2025groups}.}
    \label{fig:algebraic_graphs}
\end{figure}
Let $O_N: \ell^2(X_N) \to \ell^2(X_N)$ denote the normalized discrete \textit{adjacency operator}, evaluated pointwise
\[
    [O_N f](v) \triangleq \frac{1}{K} \sum_{k=1}^K f(\sigma_{N, k}(v)).
\]

\begin{definition}[\cite{hoory2006expander}]
Let $\{G_N = (X_N, E_N)\}_{N=1}^\infty$ be a sequence of undirected, $K$-regular finite graphs. Let $\lambda_2(O_N)$ denote the second largest eigenvalue of $O_N$. The sequence $\{G_N\}_{N=1}^\infty$ constitutes a family of \textit{expander graphs} if there exists a uniform constant $c_{gap} > 0$ such that the spectral gap satisfies
\[
    1 - \lambda_2(O_N) \ge c_{gap}, \quad \forall N \ge 1.
\]
\end{definition}

\begin{lemma}[Theorem 4.3.2 in \cite{lubotzky1994discrete}]\label{lem:schreier_expander}
Under Assumption \ref{assum:property_tau}, the sequence $\{G_N\}_{N=1}^\infty$ constitutes a family of \textit{expander graphs}, satisfying
\begin{equation}
    1-\lambda_2(O_N) \ge \frac{\kappa^2}{2K}, \quad \forall N \ge 1.
\end{equation}
\end{lemma}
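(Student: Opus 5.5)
The plan is to relate the Property $(\tau)$ bound in Definition \ref{def:property_tau} to the quadratic form of the normalized adjacency operator $O_N$ on $\ell^2_0(X_N)$, and then apply the variational (Courant--Fischer) characterization of $\lambda_2(O_N)$.

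\textbf{Step 1: self-adjointness and the spectral setup.} Each $\sigma_{N,k}$ is a bijection of $X_N$, so the composition operator $P_k f \triangleq f\circ\sigma_{N,k}$ is unitary on $\ell^2(X_N)$ with the normalized inner product. Its adjoint is $P_k^* f = f\circ\sigma_{N,k}^{-1}$. Since $S$ is symmetric, $\sigma_{N,k}^{-1} = \sigma_{N,k'}$ for the index $k'$ with $s_{k'} = s_k^{-1}$. Hence $O_N = \frac{1}{K}\sum_k P_k$ is self-adjoint. Because the $P_k$ fix constants, $O_N \mathbf{1} = \mathbf{1}$, and $\ell^2_0(X_N)$ is invariant under $O_N$. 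By Courant--Fischer,
\[
\lambda_2(O_N) = \sup_{0\neq f\in \ell^2_0(X_N)} \frac{\langle O_N f, f\rangle_{\ell^2}}{\|f\|_{\ell^2}^2}.
\]

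\textbf{Step 2: the Dirichlet-form identity.} For unitary $P_k$,
\[
\|P_k f - f\|_{\ell^2}^2 = 2\|f\|_{\ell^2}^2 - 2\langle P_k f, f\rangle_{\ell^2}.
\]
Summing over $k$ and dividing by $2K$ gives
\[
\langle (I - O_N) f, f\rangle_{\ell^2} = \frac{1}{2K}\sum_{k=1}^K \|f\circ\sigma_{N,k} - f\|_{\ell^2}^2 .
\]

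\textbf{Step 3: conclusion.} Under Assumption \ref{assum:property_tau}, for every $f\in\ell^2_0(X_N)$ the right-hand side of Step 2 is at least $\frac{\kappa^2}{2K}\|f\|_{\ell^2}^2$. Therefore $\langle O_N f,f\rangle_{\ell^2} \le \bigl(1-\frac{\kappa^2}{2K}\bigr)\|f\|_{\ell^2}^2$ on $\ell^2_0(X_N)$. Taking the supremum in Step 1 yields $1-\lambda_2(O_N)\ge \frac{\kappa^2}{2K}$ uniformly in $N$, which is the expander property with $c_{gap}=\kappa^2/(2K)$.

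\textbf{Main obstacle.} The only delicate point is Step 1, namely self-adjointness, which is needed for the variational formula. It relies on the symmetry of $S$ carrying over to the action on cosets. This holds because the action is a left action: $\sigma_{N,k}\circ\sigma_{N,k'} = \mathrm{id}$ whenever $s_k s_{k'} = e$. I would also note that the graph is connected, and hence $\lambda_2<1$. This follows because the $\ell^2_0$ bound forbids nonconstant $O_N$-invariant functions, but it is not needed for the inequality itself.
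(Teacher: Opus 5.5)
Your proof is correct. The paper gives no argument of its own beyond citing Lubotzky, and your route is precisely the standard argument behind that cited result: self-adjointness of $O_N$ from the symmetry of $S$, the identity $\langle (I-O_N)f,f\rangle_{\ell^2}=\frac{1}{2K}\sum_{k}\|f\circ\sigma_{N,k}-f\|_{\ell^2}^2$, and the Rayleigh-quotient characterization of $\lambda_2$ on $\ell^2_0(X_N)$. It yields exactly the stated constant $\kappa^2/(2K)$.

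One point is worth making explicit. Your Courant--Fischer equality uses $\|O_N\|\le 1$, so that $\mathbf{1}$ is an eigenvector for the top eigenvalue $1$. However, the inequality you actually need, $\lambda_2(O_N)\le\sup_{0\neq f\in\ell^2_0}\langle O_Nf,f\rangle_{\ell^2}/\|f\|_{\ell^2}^2$, holds by min--max with no extra input.
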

\begin{remark}
    Property $(\tau)$ ensures the uniform connectivity of $\{G_N\}_{N=1}^\infty$. If $\Gamma$ fails to satisfy Property $(\tau)$ with respect to $\{\Gamma_N\}$, the Kazhdan constant might vanish as $N \to \infty$. Consequently, the graph sequence loses its expander property.
\end{remark}

\begin{assumption}\label{assm:two_sided_expander}
There exists a uniform constant $c_{abs} > 0$ such that
\[
    \|O_N\|_{\mathcal{L}(\ell^2_0)} = \max \{ \lambda_2(O_N), |\lambda_{\min}(O_N)| \} \le 1 - c_{abs}
\]
for all $N \ge 1$.
\end{assumption}
\begin{remark}
This ensures that the spectrum is strictly bounded away from $-1$, which excludes configurations that exhibit or asymptotically approach a regular bipartite structure.
\end{remark}
Define the edge space as $X \times X$ equipped with the metric $D((x_1, y_1), (x_2, y_2)) \triangleq d(x_1, x_2) + d(y_1, y_2)$. 

\begin{definition}
The \textit{empirical graphexon measure} $\mathbf{W}_N$ on the product space $X \times X$ is defined by
\begin{equation}
    \mathbf{W}_N \triangleq \frac{1}{K M_N} \sum_{v \in X_N} \sum_{k=1}^K \delta_{(\iota_N(v), \iota_N(\sigma_{N, k}(v)))}
\end{equation}
where $\delta_{(x, y)}$ denotes the Dirac measure at the point $(x, y) \in X \times X$.
\end{definition}

\begin{definition}
The projection operator $P_N: L^2(X, \nu) \to \ell^2(X_N)$ is defined by the local cell average
\[
    [P_N f](v) \triangleq \frac{1}{\nu(Q_v^N)} \int_{Q_v^N} f(z) \nu(dz).
\]
The extension operator $E_N: \ell^2(X_N) \to L^2(X, \nu)$ constructs a piecewise constant function from discrete values
\[
    [E_N \phi](x) \triangleq \sum_{v \in X_N} \phi(v) \mathbb{I}_{Q_v^N}(x).
\]
\end{definition}

\begin{definition}
The step-graphexon operator $\mathcal{G}_N: L^2(X, \nu) \to L^2(X, \nu)$ is the composition $\mathcal{G}_N \triangleq E_N \circ O_N \circ P_N$. For any $x \in X$, it is defined as
\begin{equation} \label{eq:step_operator}
    [\mathcal{G}_N f](x) = \frac{1}{K} \sum_{k=1}^K [P_N f](\sigma_{N, k}(i_N(x))).
\end{equation}
\end{definition}

\section{Continuous Graphexon Limits and Operator Convergence}\label{sec:weak_convergence}
To guarantee the existence of a limit for the sequence of discrete graph interactions, we impose a uniform regularity condition on the sequences.

\begin{assumption}\label{assm:continuum_limit}
There exists a uniform constant $L > 0$ such that for all $N \ge 1$, $k \in \{1, \dots, K\}$, and $u, v \in X_N$, the discrete mappings satisfy the Lipschitz condition on the embedded space
\[
    d(\iota_N(\sigma_{N,k}(u)), \iota_N(\sigma_{N,k}(v))) \le L d(\iota_N(u), \iota_N(v)).
\]
\end{assumption}

\begin{lemma}\label{lem:continuum_limit}
Under Assumption \ref{assm:continuum_limit}, there exists a sequence of bi-Lipschitz bijections $S_\infty \triangleq (\sigma_{\infty,1},\dots,\sigma_{\infty,K})$ on $X$ with uniform Lipschitz constant $L$, and a strictly increasing subsequence $\{N_j\}_{j=1}^\infty$, such that for each $k \in \{1, \dots, K\}$, 
\[
    \lim_{j \to \infty} \sup_{x \in X} d(\iota_{N_j}(\sigma_{N_j, k}(i_{N_j}(x))), \sigma_{\infty, k}(x)) = 0.
\]
\end{lemma}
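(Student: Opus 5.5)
The plan is to treat the maps $f_N^k \triangleq \iota_N \circ \sigma_{N,k} \circ i_N : X \to X$ as a sequence of "asymptotically equicontinuous" functions on the compact space $X$. I would then extract a uniformly convergent subsequence by an Arzelà–Ascoli type diagonal argument. Each $f_N^k$ is piecewise constant on the cells $Q_v^N$, hence not continuous, so the classical theorem does not apply directly. The key estimate replacing equicontinuity comes from Assumption \ref{assm:continuum_limit} together with $\iota_N(i_N(x)) \in Q^N_{i_N(x)}$:
\[
d(f_N^k(x), f_N^k(y)) \le L\, d(\iota_N(i_N(x)), \iota_N(i_N(y))) \le L\,\big(d(x,y) + 2\delta_N\big),
\]
for all $x,y \in X$, and $\delta_N \to 0$.

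\textbf{Step 1 (pointwise limits on a dense set).} Since $X$ is a compact metric space, it is separable; let $D = \{x_1, x_2, \dots\}$ be a countable dense subset. Since $X$ is also compact, a Cantor diagonal argument over $k \in \{1,\dots,K\}$ and $x_m \in D$ yields a strictly increasing subsequence $\{N_j\}$ such that $f_{N_j}^k(x_m)$ converges for every $k$ and $m$. Call the limits $\sigma_{\infty,k}(x_m)$. Passing to the limit in the displayed estimate shows that $\sigma_{\infty,k}$ is $L$-Lipschitz on $D$. Hence it extends uniquely to an $L$-Lipschitz map on $X$, using completeness of $X$.

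\textbf{Step 2 (uniform convergence).} Fix $\varepsilon>0$ and choose a finite $\varepsilon$-net $\{x_{m_1},\dots,x_{m_r}\} \subset D$, which exists by compactness. For any $x\in X$, pick $x_{m_i}$ with $d(x,x_{m_i})<\varepsilon$. The triangle inequality then gives
\[
d(f^k_{N_j}(x), \sigma_{\infty,k}(x)) \le L(\varepsilon + 2\delta_{N_j}) + d(f^k_{N_j}(x_{m_i}), \sigma_{\infty,k}(x_{m_i})) + L\varepsilon.
\]
Taking $\sup_x$ and $j\to\infty$, with only finitely many net points, bounds the limsup by $2L\varepsilon$. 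Letting $\varepsilon \to 0$ gives the claimed uniform convergence.

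\textbf{Step 3 (bijectivity and bi-Lipschitz).} This is the step I expect to be the main obstacle, since limits of bijections need not be bijections. I would use the symmetry of $S$: for each $k$ there is $k'$ with $s_{k'} = s_k^{-1}$, so $\sigma_{N,k'} = \sigma_{N,k}^{-1}$ on $X_N$. Because $i_N(\iota_N(w)) = w$, we have $f_N^{k'}\circ f_N^{k} = \iota_N \circ i_N$, which lies within $\delta_N$ of the identity uniformly. To pass to the limit, combine the uniform convergence of $f_{N_j}^{k}$ with the near-Lipschitz estimate for $f_{N_j}^{k'}$:
\[
d(f^{k'}_{N_j}(f^k_{N_j}(x)), f^{k'}_{N_j}(\sigma_{\infty,k}(x))) \le L\big(d(f^k_{N_j}(x),\sigma_{\infty,k}(x)) + 2\delta_{N_j}\big) \to 0.
\]
Together with the uniform convergence of $f^{k'}_{N_j}$, this yields $\sigma_{\infty,k'}\circ\sigma_{\infty,k} = \mathrm{id}_X$. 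By symmetry, $\sigma_{\infty,k}\circ\sigma_{\infty,k'} = \mathrm{id}_X$ as well. Hence $\sigma_{\infty,k}$ is a bijection with $L$-Lipschitz inverse $\sigma_{\infty,k'}$. This gives the bi-Lipschitz property, with $d(x,y)/L \le d(\sigma_{\infty,k}(x),\sigma_{\infty,k}(y)) \le L\,d(x,y)$.
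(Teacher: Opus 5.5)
Your proposal is correct and follows the paper's proof essentially step for step: diagonal extraction on a countable dense set, the near-Lipschitz estimate $d(f_N^k(x),f_N^k(y))\le L\,(d(x,y)+2\delta_N)$, Lipschitz extension to $X$, and a finite $\varepsilon$-net for uniform convergence. Your Step 3 goes further than the paper, which only asserts that ``the symmetry of $S$ gives the identical bound for the inverse mappings''. You actually prove bijectivity via the identity $f_N^{k'}\circ f_N^{k}=\iota_N\circ i_N$ and pass to the limit to get $\sigma_{\infty,k'}\circ\sigma_{\infty,k}=\mathrm{id}_X$.
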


\begin{proof}
Let $\mathcal{D} \subset X$ be a countable dense subset. Since $X$ is compact, it follows from Cantor's diagonal argument that there exists a subsequence $\{N_j\}_{j=1}^\infty$ such that $f_{N_j, k}(x) \triangleq \iota_{N_j}(\sigma_{N_j, k}(i_{N_j}(x)))$ converge pointwise for all $x \in \mathcal{D}$ and all $k$. Denote the limit by $\sigma_{\infty, k}(x)$.

By Assumption \ref{assm:continuum_limit} and $d(\iota_{N_j}(i_{N_j}(x)), x) \le \delta_{N_j}$, for any $x, y \in \mathcal{D}$, 
\[
    d(f_{N_j, k}(x), f_{N_j, k}(y)) \le L(d(x, y) + 2\delta_{N_j}).
\]
Taking $j \to \infty$ with $\delta_{N_j} \to 0$ shows that $\sigma_{\infty, k}$ is $L$-Lipschitz on $\mathcal{D}$, which extends to $X$. The symmetry of the generating set $S$ gives the identical bound for the inverse mappings, establishing each $\sigma_{\infty, k}$ as a bi-Lipschitz bijection.

For uniform convergence, let $\epsilon > 0$ and let $y \in \mathcal{D}$ be a point in a finite $\epsilon$-net of $X$ closest to $x$. Then,
\begin{align*}
    d(&f_{N_j, k}(x), \sigma_{\infty, k}(x)) \le d(f_{N_j, k}(x), f_{N_j, k}(y)) \\
    &\qquad+ d(f_{N_j, k}(y), \sigma_{\infty, k}(y)) + d(\sigma_{\infty, k}(y), \sigma_{\infty, k}(x)) \\
    &\le L(d(x, y) + 2\delta_{N_j}) + d(f_{N_j, k}(y), \sigma_{\infty, k}(y)) + L d(x, y) \\
    &= 2L(d(x, y) + \delta_{N_j}) + d(f_{N_j, k}(y), \sigma_{\infty, k}(y)).
\end{align*}
Taking the limit superior as $j \to \infty$, the last term vanishes uniformly over the finite net and $\delta_{N_j} \to 0$, leaving the bound $2L\epsilon$. Since $\epsilon$ is arbitrary, the convergence is uniform over $X$.
\end{proof}
For all $A \in \mathcal{B}$, the limit mappings satisfy the measure-preserving property $\nu(\sigma_{\infty,k}^{-1}(A)) = \nu(A)$. Since the original generator set $S$ is symmetric, $S_\infty$ remains closed under inversion up to a permutation of its indices.

The discretization error associating the continuous mapping with its discrete approximation is quantified by
\[
    \epsilon_{N_j, k} \triangleq \max_{v \in X_{N_j}} d(\sigma_{\infty, k}(\iota_{N_j}(v)), \iota_{N_j}(\sigma_{N_j, k}(v))).
\]
By Lemma \ref{lem:continuum_limit}, the maximum error vanishes asymptotically
\[
    \lim_{j \to \infty} \max_{k \in \{1, \dots, K\}} \epsilon_{N_j, k} = 0.
\]

As $j \to \infty$, distinct discrete generators in $S_{N_j}$ may converge to identical continuous mappings. Let $\tilde{S}_\infty$ denote the set comprising the distinct transformations from the sequence $S_\infty$, and let $\tilde{\Gamma}$ denote the finitely generated group constructed by $\tilde{S}_\infty$.

Since the underlying space $X$ is compact, the product space $(X \times X, D)$ is also compact. By Prokhorov's theorem, the uniformly tight sequence of probability measures $\{\mathbf{W}_{N_j}\}_{j=1}^\infty$ is relatively compact in the topology of weak convergence, guaranteeing the existence of weakly convergent subsequential limits.

We therefore construct the continuous limit measure $\mathbf{W}$ on $X \times X$ associated with the limit sequence $S_\infty$ by
\begin{equation}
    \mathbf{W}(dx, dy) \triangleq \frac{1}{K} \sum_{k=1}^K \nu(dx) \delta_{\sigma_{\infty, k}(x)}(dy).
\end{equation}

Within the graphexon framework \cite{caines2022embedded,caines2024sparse}, any graphexon measure $\mathbf{G}$ on the product space $X \times X$ admits a Lebesgue decomposition
\[
\mathbf{G}(dx, dy) = g(x, y) \nu(dx) \nu(dy) + \mathbf{N}(dx, dy)
\]
where $g \in L^1(X \times X)$ represents the absolutely continuous component capturing dense connections, and $\mathbf{N}$ is the singular measure component capturing sparse connections. 

For the limit measure $\mathbf{W}$ constructed above, the absolutely continuous component is $g(x, y) = 0$ since the measure is entirely supported on the graphs of the measure-preserving bijections $\sigma_{\infty, k}$. Consequently, the edge topology of the limit expander graph is purely sparse and is completely characterized by the singular component $\mathbf{N} = \mathbf{W}$.

\begin{theorem}
Under Assumption \ref{assm:continuum_limit}, the subsequence of empirical graphexon measures $\mathbf{W}_{N_j}$ converges weakly to the graphexon measure $\mathbf{W}$.
\end{theorem}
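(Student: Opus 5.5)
To prove weak convergence I will test $\mathbf{W}_{N_j}$ against an arbitrary bounded continuous function $\Phi \in C_b(X\times X)$ and show that $\int \Phi\, d\mathbf{W}_{N_j} \to \int \Phi\, d\mathbf{W}$. Since $(X\times X, D)$ is compact, $\Phi$ is uniformly continuous, so I fix $\epsilon>0$ and choose $\eta>0$ with $|\Phi(p)-\Phi(q)|<\epsilon$ whenever $D(p,q)<\eta$. It suffices to treat each generator index $k$ separately, because both measures are averages over $k$ with weight $1/K$. For fixed $k$ I compare
\[
A_j \triangleq \frac{1}{M_{N_j}}\sum_{v\in X_{N_j}} \Phi\bigl(\iota_{N_j}(v),\iota_{N_j}(\sigma_{N_j,k}(v))\bigr)
\quad\text{with}\quad
B \triangleq \int_X \Phi\bigl(x,\sigma_{\infty,k}(x)\bigr)\,\nu(dx).
\]

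\textbf{Step 1: pass from discrete to continuous target.} I replace $\iota_{N_j}(\sigma_{N_j,k}(v))$ by $\sigma_{\infty,k}(\iota_{N_j}(v))$. By the definition of $\epsilon_{N_j,k}$, these points are within $\epsilon_{N_j,k}$ of each other. Lemma \ref{lem:continuum_limit} gives $\epsilon_{N_j,k}\to 0$, so for large $j$ this substitution changes $A_j$ by at most $\epsilon$. Call the resulting sum $A_j'$; it is a Riemann-type sum $\frac{1}{M_{N_j}}\sum_v \psi(\iota_{N_j}(v))$ for the continuous function $\psi(x)\triangleq\Phi(x,\sigma_{\infty,k}(x))$. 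Continuity of $\psi$ uses that $\sigma_{\infty,k}$ is $L$-Lipschitz.

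\textbf{Step 2: Riemann sum to integral.} Since $\nu(Q_v^{N_j})=1/M_{N_j}$, I rewrite
\[
B=\sum_v \int_{Q_v^{N_j}}\psi\,d\nu,
\qquad
|A_j'-B|\le \sum_v\int_{Q_v^{N_j}}\bigl|\psi(\iota_{N_j}(v))-\psi(z)\bigr|\,\nu(dz).
\]
Because $\iota_{N_j}(v)\in Q_v^{N_j}$ and the cell diameters are at most $\delta_{N_j}\to0$, we have $D\bigl((\iota(v),\sigma_\infty(\iota(v))),(z,\sigma_\infty(z))\bigr)\le (1+L)\delta_{N_j}$. This is below $\eta$ for large $j$, so $|A_j'-B|\le\epsilon$. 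Combining the two steps and averaging over $k$ gives $\limsup_j|\int\Phi\,d\mathbf{W}_{N_j}-\int\Phi\,d\mathbf{W}|\le 2\epsilon$, and since $\epsilon$ is arbitrary the theorem follows.

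The main obstacle is Step 1, which needs the discretization error to vanish uniformly in $v$, not just pointwise. Lemma \ref{lem:continuum_limit} gives uniform convergence of $\iota(\sigma_{N_j,k}(i_{N_j}(x)))$ to $\sigma_{\infty,k}(x)$. Evaluating at $x=\iota_{N_j}(v)$ uses $i_{N_j}(\iota_{N_j}(v))=v$, so the error is controlled by that uniform supremum; this is what justifies $\epsilon_{N_j,k}\to0$ as stated after the lemma. With that in hand, the rest is uniform continuity on a compact space. Prokhorov's theorem is not needed, since the limit is identified directly along the subsequence.
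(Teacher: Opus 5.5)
Your proof is correct and follows essentially the same route as the paper. The paper merges your Steps 1 and 2 into the single triangle-inequality estimate $D\bigl((x,\sigma_{\infty,k}(x)),(\iota_{N_j}(v),\iota_{N_j}(\sigma_{N_j,k}(v)))\bigr)\le(1+L_\tau)\delta_{N_j}+\epsilon_{N_j,k}$ for $x\in Q_v^{N_j}$, then applies the modulus of continuity of the test function. Your observation that $i_{N_j}(\iota_{N_j}(v))=v$, which turns the uniform convergence in Lemma~\ref{lem:continuum_limit} into $\epsilon_{N_j,k}\to 0$, makes explicit a step the paper only asserts.
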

\begin{proof}
Let $\Psi \in C(X \times X)$ be a continuous test function. Since $X \times X$ is compact, $\Psi$ is uniformly continuous. Let $\omega_\Psi$ denote its uniform modulus of continuity. The integration of $\Psi$ with respect to $\mathbf{W}_{N_j}$ evaluates as a finite summation over the discrete atomic support
\[
    \int_{X^2} \Psi d\mathbf{W}_{N_j} = \frac{1}{K M_{N_j}} \sum_{k=1}^K  \sum_{v \in X_{N_j}} \Psi(\iota_{N_j}(v), \iota_{N_j}(\sigma_{N_j, k}(v))).
\]
The integration with respect to $\mathbf{W}$ expands over the uniform cells
\[
    \int_{X^2} \Psi d\mathbf{W} = \frac{1}{K} \sum_{k=1}^K \sum_{v \in X_{N_j}} \int_{Q_v^{N_j}} \Psi(x, \sigma_{\infty, k}(x)) \nu(dx).
\]
For any $x \in Q_v^{N_j}$, the embedding distance satisfies $d(x, \iota_{N_j}(v)) \le \delta_{N_j}$. Let $L_{\tau}$ denote the uniform maximum Lipschitz constant for all $\sigma_{\infty, k} \in S_\infty$. The total deviation $D$ is bounded by
\begin{align*}
    D((x, \sigma_{\infty, k}(x))&, (\iota_{N_j}(v), \iota_{N_j}(\sigma_{N_j, k}(v))))\\
    &= d(x, \iota_{N_j}(v)) + d(\sigma_{\infty, k}(x), \iota_{N_j}(\sigma_{N_j, k}(v))) \\
    &\le \delta_{N_j} + d(\sigma_{\infty, k}(x), \sigma_{\infty, k}(\iota_{N_j}(v)))\\
    &\qquad+ d(\sigma_{\infty, k}(\iota_{N_j}(v)), \iota_{N_j}(\sigma_{N_j, k}(v))) \\
    &\le \delta_{N_j} + L_\tau d(x, \iota_{N_j}(v)) + \epsilon_{N_j, k} \\
    &\le (1+L_\tau)\delta_{N_j} + \epsilon_{N_j, k}.
\end{align*}
Applying the modulus of continuity $\omega_\Psi$, the difference obeys the uniform bound
\[
    \left| \int_{X^2} \Psi d\mathbf{W}_{N_j} - \int_{X^2} \Psi d\mathbf{W} \right| \le \omega_\Psi((1+L_\tau)\delta_{N_j} + \epsilon_{N_j}).
\]
It follows from $\lim_{j \to \infty} \delta_{N_j} = 0$ and $\lim_{j \to \infty} \epsilon_{N_j} = 0$ that the upper bound vanishes asymptotically. This establishes the weak convergence of the specified subsequence to $\mathbf{W}$.
\end{proof}

\begin{definition}
The \textit{graphexon operator} $\mathcal{G}: L^2(X, \nu) \to L^2(X, \nu)$ is defined by, for any $m \in L^2(X, \nu)$,
\begin{equation}\label{eq:graphexon_operator}
    [\mathcal{G} m](x) \triangleq \frac{1}{K} \sum_{k=1}^K m(\sigma_{\infty, k}(x)).
\end{equation}
\end{definition}
\begin{remark}\label{rem:physical_motivation}
The operator $\mathcal{G}$ establishes the coupling mechanism for the multi-agent mean field system. In the classical dense network limit, the network aggregation of the mean state $m \in L^2(X, \nu)$ evaluated at a spatial label $\alpha \in X$ is governed by a Hilbert-Schmidt integral operator
\[
    \int_X W(\alpha, \beta) m(\beta) \nu(d\beta)
\]
where $W$ is the graphon kernel (see, e.g., \cite{caines2019graphon,gao2019graphon,zhang2025perturbation}). For the sparse expander graph sequences considered in this work, the classical integral coupling degenerates. The graphexon operator $\mathcal{G}$ replaces the dense integral by Equation \eqref{eq:graphexon_operator} which dictates how the local mean field interacts across the limiting sparse topology.
\end{remark}

To preclude the limit network from fracturing into disjoint invariant subsets of positive measure, a scenario that would yield multiple principal eigenvalues and eliminate the spectral gap on $L^2_0(X, \nu)$, we require the limit group action to be \textit{ergodic}.
\begin{definition}[\cite{zimmer2013ergodic}]
The group action of $\tilde{\Gamma}$ on $X$ is \textit{ergodic} if for any measurable set $E \in \mathcal{B}$ satisfying $\nu(\sigma_{\infty, k}(E) \triangle E) = 0$ for all $k \in \{1, \dots, K\}$ where $\triangle$ denotes the symmetric difference of sets, it holds that either $\nu(E) = 0$ or $\nu(E) = 1$.
\end{definition}
\begin{assumption}\label{assm:ergodicity}
The action of the limit group $\tilde{\Gamma}$ on the space $X$ is ergodic.
\end{assumption}
\begin{example}\label{ex:continuous_torus}
Following Example \ref{ex:discrete_torus}, as $N \to \infty$, the embedded shift vectors scaled by $e_j/N$ vanish. The continuous limit connections are governed by $S_\infty = (\sigma_{\infty, 1}, \dots, \sigma_{\infty, 8})$. For any $x \in \mathbb{T}^2$, the limit mappings converge pointwise to
\begin{align*}
    \sigma_{\infty, 1}(x) &= \sigma_{\infty, 2}(x) = T_1 x \bmod 1, \\
    \sigma_{\infty, 3}(x) &= \sigma_{\infty, 4}(x) = T_2 x \bmod 1, \\
    \sigma_{\infty, 5}(x) &= \sigma_{\infty, 6}(x) = T_1^{-1} x \bmod 1, \\
    \sigma_{\infty, 7}(x) &= \sigma_{\infty, 8}(x) = T_2^{-1} x \bmod 1.
\end{align*}
These limit transformations generate the linear subgroup $\tilde{\Gamma}$ defined in Example \ref{ex:discrete_torus}, which acts ergodically on $\mathbb{T}^2$, satisfying Assumption \ref{assm:ergodicity}.
\end{example}

\begin{lemma}\label{lem:spectral_properties}
The limit operator $\mathcal{G}$ is self-adjoint on $L^2(X, \nu)$ and its spectrum $\sigma(\mathcal{G})$ is real. Furthermore, under Assumption \ref{assm:ergodicity}, the principal eigenvalue $\lambda = 1$ is simple, and its associated eigenspace is $\text{span}\{\mathbf{1}\}$.
\end{lemma}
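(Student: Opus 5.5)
The plan is to write $\mathcal{G}$ as an average of Koopman operators. For each $k$ set $U_k m \triangleq m\circ\sigma_{\infty,k}$, so that $\mathcal{G} = \frac{1}{K}\sum_{k=1}^K U_k$. Because each $\sigma_{\infty,k}$ is a measure-preserving bijection (the property recorded after Lemma \ref{lem:continuum_limit}), every $U_k$ is an isometry of $L^2(X,\nu)$. It is in fact unitary, with inverse $U_k^{-1} m = m\circ\sigma_{\infty,k}^{-1}$. The change of variables $y=\sigma_{\infty,k}(x)$ gives
\[
\langle U_k m, h\rangle = \int_X m(\sigma_{\infty,k}(x))h(x)\,\nu(dx) = \int_X m(y)\,h(\sigma_{\infty,k}^{-1}(y))\,\nu(dy),
\]
so $U_k^* = U_{k'}$, where $\sigma_{\infty,k'}=\sigma_{\infty,k}^{-1}$. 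Since $S_\infty$ is closed under inversion up to a permutation $k\mapsto k'$ of the indices, summing over $k$ yields $\mathcal{G}^*=\mathcal{G}$. One point needs care here: the inversion must be an honest permutation of the multiset $S_\infty$, with multiplicities respected. I will take this from the symmetry of $S$, since $s_k^{-1}=s_{k'}$ gives $\sigma_{N,k}^{-1}=\sigma_{N,k'}$, and this relation passes to the limit along the subsequence of Lemma \ref{lem:continuum_limit}. Once $\mathcal{G}$ is self-adjoint, its spectrum is real by the standard theorem for bounded self-adjoint operators. We also have $\|\mathcal{G}\|\le 1$, so $\sigma(\mathcal{G})\subset[-1,1]$.

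Next, $\mathcal{G}\mathbf{1}=\mathbf{1}$ holds trivially, so $1$ is an eigenvalue and $\mathrm{span}\{\mathbf{1}\}$ lies in its eigenspace. For the reverse inclusion, let $\mathcal{G}m=m$. Then
\[
\|m\|^2=\langle\mathcal{G}m,m\rangle=\frac1K\sum_k\langle U_km,m\rangle .
\]
By Cauchy–Schwarz and the isometry property, each term satisfies $\langle U_km,m\rangle\le\|m\|^2$, so every term must equal $\|m\|^2$. Equality in Cauchy–Schwarz between two vectors of the same norm forces $U_km=m$, that is, $m\circ\sigma_{\infty,k}=m$ $\nu$-a.e. for every $k$. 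The same conclusion follows from the identity $\frac1K\sum_k\|U_km-m\|^2 = 2\|m\|^2-2\langle\mathcal{G}m,m\rangle=0$.

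The remaining step, which I expect to be the main one, is to pass from a.e.\ invariance of $m$ to constancy using Assumption \ref{assm:ergodicity}. Fix $c\in\mathbb{Q}$ and set $E_c=\{m>c\}$. Then $\sigma_{\infty,k}^{-1}(E_c)=\{m\circ\sigma_{\infty,k}>c\}$ agrees with $E_c$ up to a null set. Applying the same reasoning to the inverse maps, which also lie in $S_\infty$, gives $\nu(\sigma_{\infty,k}(E_c)\triangle E_c)=0$ for all $k$. Ergodicity then forces $\nu(E_c)\in\{0,1\}$ for each rational $c$. Let $c^*=\sup\{c\in\mathbb{Q}:\nu(E_c)=1\}$. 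This is finite, since $m\in L^2$ rules out $\nu(E_c)=1$ for all $c$ or $\nu(E_c)=0$ for all $c$. It follows that $m=c^*$ $\nu$-a.e., which proves that the eigenspace of $1$ is exactly $\mathrm{span}\{\mathbf{1}\}$ and that the eigenvalue is simple. Simplicity here is multiplicity one, which is all that the statement asks for.
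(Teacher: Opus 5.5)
Your proof is correct and takes essentially the same route as the paper. Self-adjointness comes from the measure-preserving change of variables $y=\sigma_{\infty,k}(x)$ and the inversion-closure of $S_\infty$; the equality case of a norm inequality forces $m\circ\sigma_{\infty,k}=m$; ergodicity then gives constancy. The only differences are refinements: you obtain each $U_k m=m$ directly from Cauchy--Schwarz on $\langle U_k m,m\rangle$ rather than via strict convexity plus substitution, and you supply details the paper leaves implicit (that $s_k^{-1}=s_{k'}$ passes to $\sigma_{\infty,k'}=\sigma_{\infty,k}^{-1}$ in the limit, and the level-set argument turning a.e.\ invariance into constancy under the symmetric-difference form of ergodicity).
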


\begin{proof}
For any $f, g \in L^2(X, \nu)$, the inner product is
\[
    \langle \mathcal{G}f, g \rangle = \frac{1}{K} \sum_{k=1}^K \int_X f(\sigma_{\infty, k}(x)) g(x) \nu(dx).
\]
Applying the measure-preserving change of variables $y = \sigma_{\infty, k}(x)$ provides
\[
    \langle \mathcal{G}f, g \rangle = \int_X f(y) \left( \frac{1}{K} \sum_{k=1}^K g(\sigma_{\infty, k}^{-1}(y)) \right) \nu(dy).
\]
Since $S_\infty$ is closed under inversion up to a permutation of its indices, the summation over the inverse sequence is a rearrangement of the forward summation, which is $[\mathcal{G}g](y)$. Thus $\langle \mathcal{G}f, g \rangle = \langle f, \mathcal{G}g \rangle$, confirming that $\mathcal{G}$ is self-adjoint and $\sigma(\mathcal{G}) \subset \mathbb{R}$.

Suppose $f \in L^2(X, \nu)$ satisfies $\mathcal{G}f = f$. Taking the $L^2$-norm yields
\[
    \|f\|_{L^2} = \|\mathcal{G}f\|_{L^2} \le \frac{1}{K} \sum_{k=1}^K \|f \circ \sigma_{\infty, k}\|_{L^2} = \|f\|_{L^2}.
\]
The strict convexity of $L^2(X, \nu)$ dictates that the triangle inequality reaches equality if and only if all functions $f \circ \sigma_{\infty, k}$ are identical almost everywhere up to a non-negative scalar, satisfying $f \circ \sigma_{\infty, k} = c_k (f \circ \sigma_{\infty, 1})$. Recall that each map $\sigma_{\infty, k}$ is measure-preserving, the $L^2$-norms are conserved, forcing $c_k=1$. Consequently, substituting $f \circ \sigma_{\infty, 1} = \dots = f \circ \sigma_{\infty, K}$ into the eigenvector equation establishes $f \circ \sigma_{\infty, 1} = f$. This implies $f$ is $\tilde{\Gamma}$-invariant. By Assumption \ref{assm:ergodicity}, $f$ must be constant $\nu$-almost everywhere. Therefore, the eigenspace corresponding to $\lambda = 1$ is $\text{span}\{\mathbf{1}\}$, and its dimension is $1$, establishing that the principal eigenvalue $\lambda = 1$ is simple.
\end{proof}

\begin{proposition}\label{prop:strong_convergence}
Under Assumption \ref{assm:continuum_limit}, for any $f \in L^2(X, \nu)$, the subsequence of operators $\{\mathcal{G}_{N_j}\}_{j=1}^\infty$ converges strongly to $\mathcal{G}$ in the $L^2$-norm, satisfying
\[
    \lim_{j \to \infty} \| \mathcal{G}_{N_j} f - \mathcal{G} f \|_{L^2} = 0.
\]
\end{proposition}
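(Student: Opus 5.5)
The plan is a density argument: prove convergence for continuous test functions, then extend to all of $L^2(X,\nu)$ using a uniform operator bound. Since $X$ is a compact metric space with a Borel probability measure, $C(X)$ is dense in $L^2(X,\nu)$. For any $f\in L^2$ and $h\in C(X)$ we will split
\[
\|\mathcal{G}_{N_j}f-\mathcal{G}f\|_{L^2}\le \|\mathcal{G}_{N_j}(f-h)\|_{L^2}+\|\mathcal{G}_{N_j}h-\mathcal{G}h\|_{L^2}+\|\mathcal{G}(h-f)\|_{L^2}.
\]
The third term is at most $\|f-h\|_{L^2}$, because each $\sigma_{\infty,k}$ is measure preserving and so $\|\mathcal{G}\|\le 1$.

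For the first term we show $\|\mathcal{G}_{N_j}\|\le 1$ by factoring through the three pieces of $\mathcal{G}_N=E_N\circ O_N\circ P_N$.
\begin{itemize}
\item $P_N$ is a contraction from $L^2$ to the normalized $\ell^2$, by Jensen on each cell together with $\nu(Q_v^N)=1/M_N$.
\item $E_N$ is an isometry from the normalized $\ell^2$ into $L^2$.
\item $O_N$ is a contraction on $\ell^2(X_N)$, since each $\sigma_{N,k}$ is a bijection of $X_N$ and composition with it therefore preserves the normalized $\ell^2$ norm.
\end{itemize}
Together these give $\|\mathcal{G}_{N_j}\|\le 1$ uniformly in $j$, so the first term is also at most $\|f-h\|_{L^2}$.

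The remaining work, and the main step, is the uniform convergence $\mathcal{G}_{N_j}h\to\mathcal{G}h$ for continuous $h$; this implies $L^2$ convergence because $\nu$ is a probability measure. Let $\omega_h$ be the modulus of continuity of $h$ and fix $x\in Q_v^{N_j}$.
\begin{itemize}
\item The cell average $[P_{N_j}h](\sigma_{N_j,k}(v))$ differs from $h(\iota_{N_j}(\sigma_{N_j,k}(v)))$ by at most $\omega_h(\delta_{N_j})$, because the representative lies in the cell.
\item By Lemma \ref{lem:continuum_limit}, $d(\iota_{N_j}(\sigma_{N_j,k}(i_{N_j}(x))),\sigma_{\infty,k}(x))\to 0$ uniformly in $x$. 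Equivalently, one can use the bound $L_\tau\delta_{N_j}+\epsilon_{N_j,k}$ exactly as in the proof of the weak convergence theorem.
\end{itemize}
Hence $|h(\iota_{N_j}(\sigma_{N_j,k}(v)))-h(\sigma_{\infty,k}(x))|\le\omega_h(L_\tau\delta_{N_j}+\epsilon_{N_j,k})$. Averaging over $k$ gives
\[
\sup_x|[\mathcal{G}_{N_j}h](x)-[\mathcal{G}h](x)|\le \omega_h(\delta_{N_j})+\max_k\omega_h(L_\tau\delta_{N_j}+\epsilon_{N_j,k})\to 0.
\]

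To conclude, given $\varepsilon>0$, choose $h\in C(X)$ with $\|f-h\|_{L^2}<\varepsilon$. Then $\limsup_j\|\mathcal{G}_{N_j}f-\mathcal{G}f\|_{L^2}\le 2\varepsilon$, and letting $\varepsilon\to 0$ proves the claim. The only delicate points are the uniform bound $\|\mathcal{G}_{N_j}\|\le1$, which is what makes the density step legitimate, and the need for uniform rather than pointwise control of the discretization error in the continuous case. The latter is exactly what Lemma \ref{lem:continuum_limit} supplies.
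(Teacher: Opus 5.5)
Your proof is correct and follows essentially the same route as the paper. You prove uniform (hence $L^2$) convergence on a dense class of regular functions via the discretization bound $L_\tau\delta_{N_j}+\epsilon_{N_j,k}$, then extend to all of $L^2(X,\nu)$ using the uniform bound $\|\mathcal{G}_{N_j}\|\le 1$ from the factorization $E_{N}\circ O_{N}\circ P_{N}$. The differences are cosmetic: the paper works with Lipschitz test functions and a constant $L_f$ where you use continuous ones with a modulus of continuity $\omega_h$, and you make explicit the bound $\|\mathcal{G}\|\le 1$ (from measure preservation of $\sigma_{\infty,k}$) that the paper's density step uses only implicitly.
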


\begin{proof}
We first establish convergence on the dense subspace of Lipschitz continuous functions. Let $f: X \to \mathbb{R}$ be Lipschitz continuous with constant $L_f$. For a fixed $x \in X$, let $v = i_{N_j}(x)$. Define the pointwise evaluation error for $k \in \{1, \dots, K\}$ by
\[
    \Delta_k(x) \triangleq \left| [P_{N_j} f](\sigma_{N_j, k}(v)) - f(\sigma_{\infty, k}(x)) \right|.
\]
Let $u = \sigma_{N_j, k}(v)$. Expanding the projection operator $P_{N_j}$ provides 
\begin{align*}
    \Delta_k(x) &\le M_{N_j} \int_{Q_u^{N_j}} |f(z) - f(\sigma_{\infty, k}(x))| \nu(dz) \\
    &\le M_{N_j} \int_{Q_u^{N_j}} L_f d(z, \sigma_{\infty, k}(x)) \nu(dz).
\end{align*}
For any $z \in Q_u^{N_j}$, the metric bound observes
\begin{align*}
    d(z, \sigma_{\infty, k}(x)) &\le d(z, \iota_{N_j}(u)) + d(\iota_{N_j}(u), \sigma_{\infty, k}(\iota_{N_j}(v))) \\
    &\quad+ d(\sigma_{\infty, k}(\iota_{N_j}(v)), \sigma_{\infty, k}(x)).
\end{align*}
Because $z, \iota_{N_j}(u) \in Q_u^{N_j}$, one has $d(z, \iota_{N_j}(u)) \le \delta_{N_j}$. The middle term is bounded by $\epsilon_{N_j}$. Let $L_{\tau}$ denote the uniform maximum Lipschitz constant for all transformations $\sigma_{\infty, k} \in S_\infty$. Since both $x$ and $\iota_{N_j}(v)$ reside in $Q_v^{N_j}$, the final term obeys $d(\sigma_{\infty, k}(\iota_{N_j}(v)), \sigma_{\infty, k}(x)) \le L_{\tau} \delta_{N_j}$. Defining $C_{N_j} \triangleq L_f ((1 + L_{\tau})\delta_{N_j} + \epsilon_{N_j})$, it follows that $\Delta_k(x) \le C_{N_j}$. The squared $L^2$-norm of the operator difference is subsequently bounded by
\[
    \| \mathcal{G}_{N_j} f - \mathcal{G} f \|_{L^2}^2 \le \int_X \left( \frac{1}{K} \sum_{k=1}^K \Delta_k(x) \right)^2 \nu(dx) \le C_{N_j}^2.
\]
It follows from $\lim_{j \to \infty} \delta_{N_j} = 0$ and $\lim_{j \to \infty} \epsilon_{N_j} = 0$ that $C_{N_j}$ vanishes asymptotically. This confirms strong convergence for all Lipschitz continuous functions.

Furthermore, the projection $P_{N_j}$, discrete adjacency $O_{N_j}$, and extension $E_{N_j}$ are contraction operators, guaranteeing the uniform bound $\|\mathcal{G}_{N_j}\|_{\mathcal{L}(L^2)} \le 1$ for all $j$. Since the space of Lipschitz continuous functions is dense in $L^2(X, \nu)$, the convergence on this dense subspace, combined with the a priori uniform bound of the operator norms, extends to strong operator convergence across the entire Hilbert space $L^2(X, \nu)$.
\end{proof}

\begin{proposition}\label{prop:absolute_spectral_gap}
Let Assumptions \ref{assm:two_sided_expander}, \ref{assm:continuum_limit}, and \ref{assm:ergodicity} hold. Then the spectral radius of the limit graphexon operator $\mathcal{G}$ restricted to $L^2_0(X, \nu)$ satisfies $ \|\mathcal{G}\|_{\mathcal{L}(L^2_0)} \le 1-c_{abs} < 1$.
\end{proposition}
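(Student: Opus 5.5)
We transfer the uniform two-sided spectral bound of Assumption \ref{assm:two_sided_expander} from the finite adjacency operators $O_{N_j}$ to the limit $\mathcal{G}$ through the strong convergence of Proposition \ref{prop:strong_convergence}. Since $\mathcal{G}$ is self-adjoint (Lemma \ref{lem:spectral_properties}) and leaves $L^2_0(X,\nu)$ invariant, its operator norm on $L^2_0$ coincides with its spectral radius there. It therefore suffices to prove $\|\mathcal{G} f\|_{L^2} \le (1-c_{abs})\|f\|_{L^2}$ for every $f \in L^2_0(X,\nu)$. Invariance holds because $\mathcal{G}$ preserves constants and is self-adjoint, so $\langle \mathcal{G} f, \mathbf{1}\rangle = \langle f, \mathcal{G}\mathbf{1}\rangle = \langle f,\mathbf{1}\rangle = 0$.

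\textbf{Step 1: the finite-level bound.} Fix $f \in L^2_0(X,\nu)$. Because the cells satisfy $\nu(Q_v^{N}) = 1/M_N$, the cell average $P_N f$ has normalized mean $\frac{1}{M_N}\sum_v [P_N f](v) = \int_X f\, d\nu = 0$, so $P_N f \in \ell^2_0(X_N)$. Moreover, $E_N$ is an isometry from $\ell^2(X_N)$ with the normalized inner product into $L^2(X,\nu)$, since $\|E_N\phi\|_{L^2}^2 = \sum_v \phi(v)^2 \nu(Q_v^N) = \|\phi\|_{\ell^2}^2$. The projection $P_N$ is a contraction by Jensen's inequality on each cell. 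Combining these facts with Assumption \ref{assm:two_sided_expander} gives
\[
\|\mathcal{G}_{N} f\|_{L^2} = \|O_N P_N f\|_{\ell^2} \le (1-c_{abs})\|P_N f\|_{\ell^2} \le (1-c_{abs})\|f\|_{L^2}.
\]

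\textbf{Step 2: passage to the limit.} Apply the bound of Step 1 along the subsequence $N_j$. By Proposition \ref{prop:strong_convergence}, $\mathcal{G}_{N_j} f \to \mathcal{G} f$ in $L^2$, so the norms converge. Taking the limit yields $\|\mathcal{G} f\|_{L^2} \le (1-c_{abs})\|f\|_{L^2}$, and hence $\|\mathcal{G}\|_{\mathcal{L}(L^2_0)} \le 1-c_{abs} < 1$.

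\textbf{Role of ergodicity.} Assumption \ref{assm:ergodicity} is not needed for the inequality itself. It is consistent with the conclusion, since Lemma \ref{lem:spectral_properties} places the eigenvalue $1$ only on $\mathrm{span}\{\mathbf{1}\}$, which is excluded from $L^2_0$. Conversely, the bound just proved shows directly that there is no other invariant function.

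\textbf{Main obstacle.} The delicate point is Step 1: the subspace $L^2_0$ must be compatible with the discretization, so that $P_N$ maps mean-zero functions into $\ell^2_0(X_N)$. This relies on the cells having equal measure $1/M_N$, matching the normalized counting measure. If that matching failed, one would instead decompose $P_N f = \bar c_N \mathbf{1} + \phi_N$ with $\phi_N \in \ell^2_0(X_N)$ and show that $\bar c_N \to 0$. Here, however, $\bar c_N$ vanishes exactly.

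Note that only strong convergence is required, not norm convergence. This is because the bound is uniform in $N$ and norm inequalities are preserved under strong limits.
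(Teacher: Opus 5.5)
Your proof is correct and follows essentially the same route as the paper. You bound $\|\mathcal{G}_{N_j} f\|_{L^2}$ by $(1-c_{abs})\|f\|_{L^2}$ on $L^2_0(X,\nu)$, using that $E_N$ is an isometry and that $P_N$ is a contraction preserving zero mean. You then pass to the limit via the strong convergence of Proposition~\ref{prop:strong_convergence}, which the paper phrases as lower semicontinuity of the operator norm. Finally, self-adjointness identifies the norm on $L^2_0$ with the spectral radius there.

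Your version is more explicit than the paper's in two places: why $P_N$ maps $L^2_0(X,\nu)$ into $\ell^2_0(X_N)$ (the cells have equal measure), and why $L^2_0$ is $\mathcal{G}$-invariant. Your remark that Assumption~\ref{assm:ergodicity} is not used is also accurate; in fact the bound itself implies that the only invariant functions are constants.
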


\begin{proof}
By Proposition \ref{prop:strong_convergence}, the subsequence $\{\mathcal{G}_{N_j}\}_{j=1}^\infty$ converges strongly to $\mathcal{G}$ on $L^2(X, \nu)$ leading to that the operator norm is lower semi-continuous (see, e.g., \cite{conway2019course}), i.e.,
\[
    \|\mathcal{G}\|_{\mathcal{L}(L^2_0)} \le \liminf_{j \to \infty} \|\mathcal{G}_{N_j}\|_{\mathcal{L}(L^2_0)}.
\]

By definition, the composition of the projection operator $P_{N_j}$ and the extension operator $E_{N_j}$ constitutes an isometry on the discrete cell structures and preserves the zero-mean property. Consequently, the operator norm of $\mathcal{G}_{N_j}$ restricted to $L^2_0(X, \nu)$ is bounded by the discrete operator norm of $O_{N_j}$ restricted to $\ell^2_0(X_{N_j})$. By Assumption \ref{assm:two_sided_expander},
\[
    \|\mathcal{G}_{N_j}\|_{\mathcal{L}(L^2_0)} \le \|O_{N_j}\|_{\mathcal{L}(\ell^2_0)} \le 1 - c_{abs}.
\]
Therefore,
\[
    \|\mathcal{G}\|_{\mathcal{L}(L^2_0)} \le \liminf_{j \to \infty} \|\mathcal{G}_{N_j}\|_{\mathcal{L}(L^2_0)} \le 1 - c_{abs} < 1.
\]

By Lemma \ref{lem:spectral_properties}, $\mathcal{G}$ is self-adjoint. For bounded self-adjoint operators, the spectral radius coincides with the operator norm, ensuring that the spectrum of $\mathcal{G}$ on the zero-mean subspace is strictly confined within the interval $[-(1-c_{abs}), 1-c_{abs}]$.
\end{proof}

\section{Graph Construction on the Flat Torus}\label{sec:margulis}

Recall the sequence of expander graphs on the two-dimensional torus $X = \mathbb{T}^2$ established in Examples \ref{ex:discrete_torus} and \ref{ex:continuous_torus}. This sequence is generated using the discrete algebraic construction of the Gabber-Galil-Margulis expander \cite{hoory2006expander}.

Let $O_N$ denote the normalized adjacency operator on $\ell^2(X_N)$ defined by the $K=8$ discrete bijections from Example \ref{ex:discrete_torus}. The expander property of this discrete sequence is guaranteed by bounding its absolute spectral radius on the zero-mean subspace.

\begin{lemma}[\cite{gabber1981explicit, hoory2006expander}]
\label{lem:gabber_galil_bound}
There exists a spectral gap such that the adjacency operator $O_N$ restricted to $\ell^2_0(X_N)$ satisfies
\begin{equation}
    \|O_N\|_{\mathcal{L}(\ell^2_0)} = \max \{ \lambda_2(O_N), |\lambda_{\min}(O_N)| \} \le \frac{5\sqrt{2}}{8}
\end{equation}
for any $N \ge 1$.
\end{lemma}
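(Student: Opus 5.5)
The plan is to follow the Gabber--Galil argument as presented in \cite{hoory2006expander}: pass to the discrete Fourier transform on $X_N \cong (\mathbb{Z}/N\mathbb{Z})^2$ and reduce the operator bound to a combinatorial inequality on the dual group. First observe that $O_N$ is self-adjoint on $\ell^2(X_N)$. The generator set is closed under inversion ($\sigma_{N,5},\dots,\sigma_{N,8}$ invert $\sigma_{N,1},\dots,\sigma_{N,4}$), and each $\sigma_{N,k}$ is a bijection, so each preserves the normalized counting measure. Hence
\[
\|O_N\|_{\mathcal{L}(\ell^2_0)} = \sup_{f\in\ell^2_0,\ \|f\|=1} |\langle O_N f, f\rangle|,
\]
and this equals $\max\{\lambda_2(O_N),|\lambda_{\min}(O_N)|\}$. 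Pairing each map with its inverse, one gets
\[
\langle O_N f, f\rangle = \frac{1}{4}\sum_{k=1}^4 \langle f\circ\sigma_{N,k}, f\rangle.
\]
It therefore suffices to show $\bigl|\sum_{k=1}^4 \langle f\circ\sigma_{N,k}, f\rangle\bigr| \le \tfrac{5\sqrt2}{2}\|f\|^2$ for mean-zero $f$.

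Next, expand $f(v)=\sum_{\xi} \hat f(\xi)\, e^{2\pi i \langle \xi, v\rangle/N}$; mean zero means $\hat f(0)=0$. Under this expansion an affine map $v\mapsto Tv+b$ sends the character indexed by $\xi$ to a character indexed by $T^{\top}\xi$, multiplied by a unimodular phase $e^{2\pi i\langle\xi,b\rangle/N}$. Grouping $\sigma_{N,1}$ with $\sigma_{N,2}$ (same linear part $T_1$, shifts $0$ and $e_1$) gives a factor $(1+e^{2\pi i \xi_1/N})$, and similarly for $\sigma_{N,3},\sigma_{N,4}$. The quadratic form becomes
\[
\sum_{\xi\neq 0} \overline{\hat f(\xi)}\Bigl[\hat f(T_1^{-\top}\xi)\,(1+\omega^{\xi_1}) + \hat f(T_2^{-\top}\xi)\,(1+\omega^{\xi_2})\Bigr],
\]
up to conjugation conventions, with $\omega=e^{2\pi i/N}$. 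After the triangle inequality and $|1+\omega^{t}|\le 2$, this becomes a bound for a real form $\sum_\xi |F(\xi)|\,|F(A\xi)|$ over the orbits of the linear maps $A\in\{T_1^{\pm\top},T_2^{\pm\top}\}$ acting on $(\mathbb{Z}/N)^2\setminus\{0\}$. The affine shifts are exactly what allow the constant $2$ in $|1+\omega^t|$ to be used rather than requiring cancellation.

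The main step is the Gabber--Galil partial-order inequality. Identify $(\mathbb{Z}/N)^2$ with $[-N/2,N/2)^2$, and define a partial order on nonzero $\xi$ using the quadrants and the lines $|\xi_1|=|\xi_2|$. For each edge $\xi\sim A\xi$, apply $2|F(\xi)F(A\xi)|\le \gamma|F(\xi)|^2+\gamma^{-1}|F(A\xi)|^2$, with $\gamma$ chosen so that the larger weight lands on the endpoint that is larger in the order. One then verifies the counting fact: for every $\xi\neq0$, among its four neighbors $T_1^{\pm\top}\xi, T_2^{\pm\top}\xi$, at most one is smaller than or incomparable with $\xi$, and at least three are strictly larger. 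Optimizing $\gamma$ with this 3-versus-1 split gives the constant. This is the step I expect to be the main obstacle. The boundary cases of the wrap-around, near the axes and the diagonals, have to be handled carefully so that the strict inequalities survive reduction modulo $N$, and the weights have to be tuned so that the final constant comes out to exactly $5\sqrt2/8$ rather than a slightly worse number. I would first work this out on $\mathbb{Z}^2$, where the Gabber--Galil diagonal-quadrant lemma holds cleanly. I would then argue that every reduction $\xi\mapsto T^{\top}\xi$ either stays within the fundamental domain or only increases the relevant norm, which preserves the ordering inequalities.

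Finally, the inequality holds pointwise in $\xi$ and does not depend on $N$, so the bound holds for every $N\ge1$. Taking the supremum over unit mean-zero $f$ gives the stated estimate on $\|O_N\|_{\mathcal{L}(\ell^2_0)}$. In particular, Assumption \ref{assm:two_sided_expander} holds for this family with $c_{abs}=1-\tfrac{5\sqrt2}{8}>0$.
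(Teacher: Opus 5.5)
The paper gives no proof of this lemma; it is quoted from \cite{gabber1981explicit, hoory2006expander}. Your reconstruction of that proof has a genuine gap at the relaxation $|1+\omega^{t}|\le 2$. Let $V_N=(\mathbb{Z}/N\mathbb{Z})^2\setminus\{0\}$ and let $A_1,A_2$ be the dual linear maps. After the relaxation you need
\[
\sum_{\xi\in V_N}|F(\xi)|\bigl(|F(A_1\xi)|+|F(A_2\xi)|\bigr)\le \tfrac{5\sqrt2}{4}\sum_{\xi\in V_N}|F(\xi)|^2 ,
\]
and this is false for every $N\ge 2$. The $A_i$ permute the finite set $V_N$, so $F\equiv 1$ on $V_N$ gives ratio exactly $2>5\sqrt2/4$. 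This $F$ is the Fourier transform of the admissible mean-zero function $N^2\delta_0-\mathbf{1}$.

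The underlying reason is that the relaxed form bounds the shift-free operator $f\mapsto\frac14\sum_{T\in\{T_1^{\pm1},T_2^{\pm1}\}}f\circ T$ just as well as it bounds $O_N$. That operator fixes $N^2\delta_0-\mathbf{1}\in\ell^2_0(X_N)$. So once the phases are discarded, nothing better than the trivial bound $\|O_N\|_{\mathcal{L}(\ell^2_0)}\le 1$ can come out. Your remark that the shifts let you ``use the constant $2$ rather than requiring cancellation'' has it backwards. The shifts matter precisely because $|1+\omega^{t}|=2|\cos(\pi t/N)|$ is small for $|t|$ near $N/2$.

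The counting lemma fails for the same reason: under any partial order on the finite set $V_N$, a maximal element has no strictly larger neighbour. Concretely, take $N=100$ and $\xi=(40,10)$. On $\mathbb{Z}^2$ its neighbours $(40,90)$, $(40,-70)$, $(60,10)$ are larger and $(20,10)$ is smaller. Reduction modulo $N$ turns the first three into $(40,-10)$, $(40,30)$, $(-40,10)$, so two of them now have the same coordinate magnitudes as $\xi$ and only one is larger. Your claim that reduction ``only increases the relevant norm'' is therefore false.

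Even on $\mathbb{Z}^2$ the lemma needs amending. On the diagonals $|\xi_1|=|\xi_2|$, two neighbours have equal coordinate magnitudes, and on the axes two neighbours are self-loops; these vertices must be handled with unit AM--GM weights. With that repair, the phase-free lattice argument optimizes at $\gamma=1/\sqrt3$ and yields $\sqrt3/2<5\sqrt2/8$. That is the continuous-torus bound of Theorem~\ref{thm:continuous_expander}, not this lemma.

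For finite $N$ you must keep the edge weights $2|\cos(\pi\xi_1/N)|$ on $\xi\leftrightarrow(\xi_1,\xi_2\pm 2\xi_1)$ and $2|\cos(\pi\xi_2/N)|$ on $\xi\leftrightarrow(\xi_1\pm2\xi_2,\xi_2)$. These weights are the only trace of the affine shifts, and they have to absorb the edges on which wrap-around destroys monotonicity. Alternatively, prove the continuous statement and transfer it to $(\mathbb{Z}/N\mathbb{Z})^2$ by a separate discretization argument.
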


The limit graphexon operator $\mathcal{G}: L^2(\mathbb{T}^2, \nu) \to L^2(\mathbb{T}^2, \nu)$ generated by $S_\infty$ evaluates
\[
    [\mathcal{G} f](x) = \frac{1}{4} \left( f(T_1 x) + f(T_1^{-1} x) + f(T_2 x) + f(T_2^{-1} x) \right).
\]

\begin{theorem}
\label{thm:continuous_expander}
The limit graphexon operator $\mathcal{G}$ possesses a continuous real spectrum on $L^2_0(\mathbb{T}^2, \nu)$, i.e., there exists no non-zero square-integrable eigenfunction $f \in L^2_0(\mathbb{T}^2, \nu)$ satisfying $\mathcal{G} f = \lambda f$ for any $\lambda \in \mathbb{R}$. Furthermore, the spectral radius of $\mathcal{G}$ restricted to $L^2_0(\mathbb{T}^2, \nu)$ is bounded away from $1$, satisfying $\|\mathcal{G}\|_{\mathcal{L}(L^2_0)} \le \frac{\sqrt{3}}{2}$.
\end{theorem}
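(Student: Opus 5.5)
Both claims will be handled in Fourier coordinates on $\mathbb{T}^2$. For $f\in L^2_0(\mathbb{T}^2,\nu)$ write $f(x)=\sum_{\xi\in\mathbb{Z}^2\setminus\{0\}}\hat f(\xi)e^{2\pi i\langle \xi,x\rangle}$. For $T\in \mathrm{SL}(2,\mathbb{Z})$ we have $e^{2\pi i\langle\xi,Tx\rangle}=e^{2\pi i\langle T^{\top}\xi,x\rangle}$. Hence $\mathcal{G}$ is unitarily equivalent, through the Plancherel isometry, to the operator on $\ell^2(\mathbb{Z}^2\setminus\{0\})$ given by
\[
[\widehat{\mathcal{G}}\phi](\xi)=\tfrac14\bigl(\phi(T_1^{-\top}\xi)+\phi(T_1^{\top}\xi)+\phi(T_2^{-\top}\xi)+\phi(T_2^{\top}\xi)\bigr).
\]
This is the Markov (random-walk) operator of the Schreier graph of the action of $\tilde\Gamma$, or more precisely its transpose group, on $\mathbb{Z}^2\setminus\{0\}$. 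Both statements then become statements about this countable $4$-regular graph.

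\textbf{Absence of eigenfunctions.} First I check that every orbit of $\tilde\Gamma^{\top}$ on $\mathbb{Z}^2\setminus\{0\}$ is infinite. The matrices $T_1,T_2$ generate a free group of finite index in $\mathrm{SL}(2,\mathbb{Z})$ (Sanov), so this subgroup contains a hyperbolic element. A hyperbolic matrix has no nonzero periodic integer vector, because its eigenvalues are irrational, and so every orbit is infinite.

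Next, suppose $\widehat{\mathcal{G}}\phi=\lambda\phi$ with $\phi\in\ell^2$. I would use a ping-pong or cone argument. Partition $\mathbb{Z}^2\setminus\{0\}$ into the regions $\{|\xi_1|>|\xi_2|\}$, $\{|\xi_2|>|\xi_1|\}$, and the diagonals. On the first region, the maps $T_1^{\pm\top}$ act as shears; together with the $T_2^{\pm\top}$ maps, exactly three of the four neighbours of a non-diagonal point have strictly larger $\|\cdot\|_\infty$ and one has smaller or equal norm. This is the classical lemma underlying the Gabber--Galil bound. The graph is therefore a tree-like structure in which norms grow away from a finite core near the diagonals. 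An $\ell^2$ eigenfunction on such a graph would need geometric decay along the outward branches. Summing the eigen-equation over the spheres $\{\|\xi\|_\infty=n\}$, or running a maximum-modulus argument on $|\phi|$, then gives a contradiction. The standard way to exclude point spectrum is the mixing property: the $\mathbb{Z}^2\rtimes$ hyperbolic action is mixing, so matrix coefficients $\langle f\circ\gamma,f\rangle\to0$ as $\gamma\to\infty$ in $\tilde\Gamma$. For an eigenfunction $f$, the iterates $\mathcal{G}^n f=\lambda^n f$ are averages over words of length $n$. Using the free-group structure, I would combine the return-probability asymptotics of the random walk with the decay of matrix coefficients to show $\langle\mathcal{G}^{2n}f,f\rangle\to0$, but only as $o(\rho^{2n})$ compared with $\lambda^{2n}\|f\|^2$. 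This is the step I expect to be the \textbf{main obstacle}, and the paper's proof likely relies on the Fourier reduction plus the infinite-orbit structure. My first attempt would be to show directly that the eigen-equation on the Fourier side forces $\phi$ to propagate along an infinite orbit with nondecaying mass, contradicting $\sum|\phi|^2<\infty$.

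\textbf{Norm bound.} For $\|\mathcal{G}\|_{\mathcal{L}(L^2_0)}\le\sqrt3/2$ I would follow Gabber--Galil's Fourier-side inequality. For $\phi\in\ell^2$ and any positive weight function $w$ on $\mathbb{Z}^2\setminus\{0\}$, apply $2|ab|\le t|a|^2+t^{-1}|b|^2$ with the ratio $t=w(\eta)/w(\xi)$ on each edge $\{\xi,\eta\}$. This gives
\[
|\langle\widehat{\mathcal{G}}\phi,\phi\rangle|\le \sup_\xi \tfrac14\sum_{\eta\sim\xi}\tfrac{w(\eta)}{w(\xi)}\;\|\phi\|^2 .
\]
Take $w(\xi)=\|\xi\|^{-1/2}$, or equivalently weight by the growth from the cone lemma. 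At a point with three outward neighbours, whose norms are multiplied by at least $2$ with slack, and one inward neighbour, the sum of ratios is bounded by roughly $3\cdot 2^{-1/2}+2^{1/2}$ up to optimisation of the exponent. Tuning the exponent should give $\tfrac14\sum w(\eta)/w(\xi)\le\sqrt3/2$. The diagonal points need a separate check with the explicit matrices. Because $\mathcal{G}$ is self-adjoint (Lemma~\ref{lem:spectral_properties}), the numerical-radius bound equals the norm bound. As a fallback, Proposition~\ref{prop:absolute_spectral_gap} combined with Lemma~\ref{lem:gabber_galil_bound} already gives $\|\mathcal{G}\|\le 5\sqrt2/8$. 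The sharper constant $\sqrt3/2$ requires the direct weighted computation.
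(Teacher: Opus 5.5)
There are two genuine gaps: you never actually exclude eigenfunctions, and the proposed Schur weight cannot reach $\sqrt3/2$.

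\textbf{Eigenfunctions.} Your infinite-orbit argument via a hyperbolic element is fine, but infinite orbits alone do not exclude eigenvalues, and the mechanisms you list do not do it.
A maximum-modulus argument on $|\phi|$ only gives $|\lambda|\le1$. Summing the eigen-equation over norm spheres $\{\|\xi\|_\infty=n\}$ gives no closed relation, because the neighbours of a point of norm $n$ have norms spread over $[n/3,3n]$. Mixing cannot rule out point spectrum of an averaging operator. It says nothing about $\mathcal{G}f=0$. Moreover, the regular representation of the lamplighter group is mixing, yet for the Grigorchuk--\.{Z}uk generators its Markov operator has pure point spectrum.

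The missing step is the one-dimensional reduction that your cone lemma already sets up. Off the axes and diagonals, each vertex has exactly one smaller-norm neighbour and three larger ones. The core of an orbit is either an axis point carrying a loop ($T_2^{\pm\top}$ fixes $(x,0)$) or the $4$-cycle $(x,x),(x,-x),(-x,-x),(-x,x)$. So every orbit is a core with rooted trees attached in which each vertex has three children. Split an eigenfunction on each hanging tree into a radial part and non-radial pieces; the latter are radial on sibling subtrees, sum to zero, and hence have zero boundary value. Each piece satisfies $3\psi_{d+1}=4\lambda\psi_d-\psi_{d-1}$, whose characteristic roots have product $1/3$, and square-summability requires $\sum_d 3^d|\psi_d|^2<\infty$.
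For $|\lambda|\le\sqrt3/2$ both roots have modulus $1/\sqrt3$, so $3^d|\psi_d|^2\not\to0$ unless $\psi\equiv0$; this is the precise content of your ``nondecaying mass''. For $|\lambda|>\sqrt3/2$ that heuristic is false, since the small root gives geometrically decaying mass. Such $\lambda$ must be excluded either by the norm bound or, as the paper does, by matching at the core: the decaying root $\alpha$ would force $\alpha+\alpha^{-1}\in\sigma(A_C)\subset[-2,2]$, which is impossible for $|\alpha|<1/\sqrt3$. Vanishing on the trees then forces vanishing on the core through the equation at the tree roots.

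\textbf{Norm bound.} The weighted Schur inequality is valid, but no power of a norm works as the weight, so ``tuning the exponent'' fails.
At $\xi=(3,1)$ the neighbours are $(3,7),(3,-5),(5,1),(1,1)$. With $w=\|\cdot\|_\infty^{-s}$ the local sum is $\tfrac14\bigl((3/7)^s+2(3/5)^s+3^s\bigr)$, which exceeds $0.95$ for every $s$. With the Euclidean norm it exceeds $0.9$. Both are above $\sqrt3/2\approx0.866$, because a single step can multiply the norm by any factor between about $1/3$ and $3$. Your own estimate $\tfrac14(3\cdot2^{-1/2}+2^{1/2})$ is exactly $5\sqrt2/8$, the Gabber--Galil constant.

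The fix is to weight by the combinatorial height $h(\xi)$, the number of steps to the core through smaller-norm neighbours, and take $w=3^{-h/2}$. This gives exactly $\tfrac14(3\cdot3^{-1/2}+3^{1/2})=\sqrt3/2$ off the core and $\tfrac14(2+2\cdot3^{-1/2})<\sqrt3/2$ on it. That is an elementary substitute for the paper's route, in which cyclic (hence amenable) stabilisers make each orbit representation weakly contained in the regular representation of $\mathbb{F}_2$, and Kesten's theorem gives $\sqrt3/2$.

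Your fallback $5\sqrt2/8$ is strictly weaker than the claim and does not prove it. Proving the sharp norm bound first would also remove the case $|\lambda|>\sqrt3/2$ from the eigenvalue argument.
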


\begin{proof}
To analyze the spectral properties of the graphexon operator $\mathcal{G}$ on the continuous flat torus $\mathbb{T}^2$, we first transfer the analysis to $\mathbb{Z}^2$ via the Fourier transform. We will first prove that $\mathcal{G}$ possesses a continuous spectrum. To achieve this, assume for contradiction that there exists a real eigenvalue $\lambda$ and a non-zero eigenfunction $f \in L^2_0(\mathbb{T}^2, \nu)$ satisfying $\mathcal{G} f = \lambda f$.

Let $\{e_n\}_{n \in \mathbb{Z}^2 \setminus \{0\}}$ denote the standard Fourier basis for $L^2_0(\mathbb{T}^2, \nu)$ defined by $e_n(x) = \exp(2\pi i n \cdot x)$. The function $f$ admits the Fourier series expansion $f(x) = \sum_{n \neq 0} c_n e_n(x)$ with $\sum_{n \neq 0} |c_n|^2 < \infty$.

Let $S_T = \{T_1, T_1^{-1}, T_2, T_2^{-1}\}$. Substituting the Fourier expansion into the eigenvalue equation and utilizing the algebraic identity $e_n(Tx) = e_{T^\top n}(x)$ yields
\[
    \frac{1}{4} \sum_{T \in S_T} \sum_{n \neq 0} c_n e_{T^\top n}(x) = \sum_{m \neq 0} \lambda c_m e_m(x)
\]
Applying the change of variables $m = T^\top n$ and observing that the generator set $S_T$ is symmetric under inversion, matching the orthogonal basis coefficients reduces the functional equation to a discrete difference equation on the dual space $\mathbb{Z}^2 \setminus \{0\}$ given by
\begin{equation} \label{eq:fourier_difference}
    \frac{1}{4} \sum_{T \in S_T} c_{T^\top m} = \lambda c_m
\end{equation}
for any $m \in \mathbb{Z}^2 \setminus \{0\}$.

Step 1. Equation \eqref{eq:fourier_difference} corresponds to the eigenvalue equation $P c = \lambda c$, where $P$ is the normalized adjacency operator on the graph with vertex set $\mathbb{Z}^2 \setminus \{0\}$ and edge set $E = \{(m, T^\top m) \mid m \in \mathbb{Z}^2 \setminus \{0\}, T \in S_T\}$. Let $S_T^\top = \{T^\top \mid T \in S_T\}$ and let $\tilde{\Gamma}^\top \subset \text{SL}(2, \mathbb{Z})$ denote the group generated by $S_T^\top$. By Sanov's Theorem \cite{sanov1947property}, the generators $T_1^\top$ and $T_2^\top$ generate a subgroup isomorphic to the free group $\mathbb{F}_2$. For any fixed non-zero vector $m_0 \in \mathbb{Z}^2 \setminus \{0\}$, its orbit under the group action constitutes the vertex set of a connected component, defined by $\mathcal{O}_{m_0} = \{ \gamma m_0 \mid \gamma \in \tilde{\Gamma}^\top \}$. 

Since $\tilde{\Gamma}^\top $ is a Zariski dense subgroup of $\text{SL}(2, \mathbb{R})$, the linear action of $\tilde{\Gamma}^\top$ on $\mathbb{Z}^2$ admits no finite orbits other than the trivial origin \cite{lubotzky1994discrete,zimmer2013ergodic}. Consequently, the dual space $\mathbb{Z}^2 \setminus \{0\}$ is partitioned into disjoint infinite orbits.

Step 2. Equation \eqref{eq:fourier_difference} takes the form of the eigenvalue equation $P c = \lambda c$, where $P$ is the transition operator of the simple random walk on the Schreier graph $G = (\mathcal{O}_{m_0}, E)$ of $\tilde{\Gamma}^\top$ acting on $\mathcal{O}_{m_0}$ and $c=\{c_m\}_{m \in \mathcal{O}_{m_0}}$ is the associated eigenfunction. 

For any $m_0 \in \mathbb{Z}^2 \setminus \{0\}$, define the stabilizer subgroup $H = \{ \gamma \in \tilde{\Gamma}^\top \mid \gamma m_0 = m_0 \}$. Matrices in $H$ must possess an eigenvalue of $1$, implying they are either unipotent or the identity. Unipotent matrices sharing a fixed eigenvector $m_0$ commute, meaning $H$ is an abelian subgroup. Since $\tilde{\Gamma}^\top \cong \mathbb{F}_2$, its abelian subgroups are cyclic. Thus $H$ is either trivial or $H \cong \mathbb{Z}$. The orbit space $\mathcal{O}_{m_0}$ is bijectively identified with $\mathbb{F}_2 / H$. The graph $G$ is isomorphic to a topological quotient of the $4$-regular tree $T_4$ by $H$. When $H$ is trivial, $G$ is a $4$-regular tree. When $H \cong \mathbb{Z}$, $G$ possesses a single finite core cycle with outward-extending infinite trees attached.

Step 3. Let $\psi \in \ell^2(\mathcal{O}_{m_0})$ be an eigenfunction on $G$ satisfying $P \psi = \lambda \psi$. On any attached tree branch, $\psi$ decomposes orthogonally into a radial component and a non-radial component $\psi^\perp$. Standard spectral properties of regular trees dictate $\psi^\perp = 0$. The function $\psi$ must be radial on the extending trees.

The radial function $\psi_d$ at distance $d$ from the core cycle or root satisfies
\begin{equation}\label{eq:root_eigen_eq}
    \frac{1}{4}(\psi_{d-1} + 3\psi_{d+1}) = \lambda \psi_d
\end{equation}
and identify $\psi_{d-1}=\psi_{d+1}=\psi_1$ when $d=0$. The associated characteristic polynomial is $3r^2 - 4\lambda r + 1 = 0$, with discriminant $\Delta = 16\lambda^2 - 12$. 

If $\Delta < 0$, the roots $r_1, r_2$ are complex conjugates satisfying $|r_1| = |r_2| = 1/\sqrt{3}$. The general solution is $\psi_d = C_1 r_1^d + C_2 r_2^d$. The normalizability condition $\sum_{m \in \mathcal{O}_{m_0}}|c_m|^2 < \infty$ requires $\sum_{d} 3^d |\psi_d|^2 < \infty$. However, $3^d |\psi_d|^2 = |C_1 (r_1 \sqrt{3})^d + C_2 (r_2 \sqrt{3})^d|^2$. Since $|r_k \sqrt{3}| = 1$ and $r_1 \neq r_2$, this sequence does not converge to zero, implying the sum diverges unless $C_1 = C_2 = 0$.

If $\Delta = 0$, the roots are degenerate with $r = \pm 1/\sqrt{3}$. The solution takes the form $\psi_d = (C_1 + C_2 d) r^d$. The condition $\sum_{d} 3^d |\psi_d|^2 = \sum_{d} (C_1 + C_2 d)^2 < \infty$ forces $C_1 = C_2 = 0$. 

If $\Delta > 0$, the condition $\sum_{d} 3^d |\psi_d|^2 < \infty$ requires the general solution to be dominated by a single root $\alpha$ satisfying $|\alpha| < 1/\sqrt{3}$.

For the pure tree configuration where $H$ is trivial, the only square-summable solution assumes the form $\psi_d = \alpha^d \psi_0$. Evaluating equation \eqref{eq:root_eigen_eq} at the root yields $\frac{1}{4}(4 \psi_1) = \lambda \psi_0$. Given $\psi_1 = \alpha \psi_0$, this implies $\lambda = \alpha$. Substituting into $4\lambda = 3\alpha + \alpha^{-1}$ yields $4\alpha = 3\alpha + \alpha^{-1}$, simplifying to $\alpha^2 = 1$. This yields $|\alpha| = 1$, which contradicts the requirement $|\alpha| < 1/\sqrt{3}$.

For the configuration where $H \cong \mathbb{Z}$, let $A_C$ be the unnormalized adjacency operator of the core cycle and let $\psi_0$ denote the restriction of $\psi$ to the cycle. The eigenvalue equation evaluated on the cycle is $ \frac{1}{4} (A_C \psi_0 + 2 \alpha \psi_0) = \lambda \psi_0$.
Substituting $4\lambda = 3\alpha + \alpha^{-1}$ reduces the equation to $A_C \psi_0 = (\alpha + \alpha^{-1}) \psi_0$. This indicates that $\alpha + \alpha^{-1}$ must be an eigenvalue of $A_C$. Since the core cycle is $2$-regular, the spectrum of $A_C$ is bounded within $[-2, 2]$. One can verify that there is no $|\alpha| < 1/\sqrt{3}$ that meet this constraint. 

Consequently, the operator $P$ admits no square-summable eigenfunctions on any orbit $\mathcal{O}_{m_0}$, forcing $c_n = 0$ for all $n \in \mathbb{Z}^2 \setminus \{0\}$, and thus $f = 0$ almost everywhere. Therefore, $\mathcal{G}$ admits no point spectrum on $L^2_0(\mathbb{T}^2, \nu)$, i.e., it has a continuous spectrum.

Next, to bound the continuous spectrum, observe that $H$ is an abelian subgroup and is amenable. By the spectral theory of random walks \cite{bekka2008kazhdan}, since the stabilizer subgroup $H$ is amenable, the spectral radius of the transition operator on the quotient graph $T_4 / H$ is exactly equal to that on the infinite tree $T_4$. By Kesten's theorem \cite{kesten1958symmetric}, the spectral radius on $T_4$ is $\frac{\sqrt{3}}{2}$. Since $L^2_0(\mathbb{T}^2, \nu)$ is the orthogonal direct sum of the spaces $\ell^2(\mathcal{O}_{m_0})$, the operator norm is given by
\[
    \|\mathcal{G}\|_{\mathcal{L}(L^2_0)} = \sup_{m_0} \|P\|_{\mathcal{L}(\ell^2(\mathcal{O}_{m_0}))} = \frac{\sqrt{3}}{2}
\]
This yields the continuous spectral bound $\frac{\sqrt{3}}{2}$.
\end{proof}

\section{Graphexon Mean Field Games on Expander Graphs}\label{sec:mfg}

To analyze the asymptotic behavior of massive populations interacting over the complex limit topologies, we formulate the graphexon mean field game on $(X, \nu)$ established in the preceding sections. 

Each agent is identified by a spatial network label $\alpha \in X$, which dictates its limit position, and only interacts with the neighborhood average defined by the operator $\mathcal{G}$. 

\subsection{Graphexon Mean Field Game Model}

Let $x: X \times [0, \infty) \to \mathbb{R}$ denote the state of the infinite population over time. For notational convenience, the shorthand $x_t^\alpha \triangleq x(\alpha, t)$ is utilized to represent the state of a representative agent located at the spatial network label $\alpha \in X$ at time $t \ge 0$. 

Let $\mu_t(\cdot; \alpha)$ denote the mean field representing the state distribution of the agent at label $\alpha$ at time $t$. The local mean state $m_t(\alpha)$ is defined as $ m_t(\alpha) \triangleq \int_{\mathbb{R}} y \mu_t(dy; \alpha)$.

Since the agents are distributed continuously over the compact metric space $X$, the global mean state $m_t(\cdot) \in L^2(X, \nu)$. The inner product on $L^2(X, \nu)$ is $\langle f, g \rangle_{L^2} \triangleq \int_{X} f(\alpha) g(\alpha) \nu(d\alpha)$ for any $f, g \in L^2(X, \nu)$.

Let the admissible control set $\mathcal{U}$ be the space of progressively measurable processes $u^\alpha$ satisfying the square-integrability condition $\mathbb{E} \int_0^\infty e^{-\gamma t} |u_t^\alpha|^2 dt < \infty$. 
The dynamics of the representative agent $\alpha$ are governed by the scalar stochastic differential equation
\begin{equation}
    d x_t^\alpha = (a x_t^\alpha + b u_t^\alpha + c [\mathcal{G} m_t](\alpha)) dt + \sigma dW_t^\alpha
\end{equation}
where $a, b \in \mathbb{R}$, $c \in \mathbb{R} \setminus \{0\}$, and $\sigma > 0$ are constant parameters. For each agent $\alpha$, $W_t^\alpha$ is an independent standard scalar Brownian motion.

The agents aim to minimize the infinite horizon discounted cost functional
\begin{equation} \label{eq:discounted_cost}
    J^\alpha(u^\alpha) = \frac{1}{2}\mathbb{E} \int_0^\infty e^{-\gamma t} \left[ q (x_t^\alpha - \eta [\mathcal{G} m_t](\alpha))^2 + r (u_t^\alpha)^2 \right] dt
\end{equation}
where $\gamma > 0$, $q \ge 0$, $r > 0$, and $\eta \in \mathbb{R}$.

Let $k \triangleq b^2/r$. Let $\Pi > 0$ be the unique positive stabilizing solution to the scalar algebraic Riccati equation
\begin{equation} \label{eq:baseline_are}
    2\Pi \left(a-\frac{\gamma}{2} \right) - k\Pi^2  + q = 0.
\end{equation}
Let $a_c \triangleq a - k \Pi$ denote the closed-loop parameter. We define the network coupling factor as $\psi \triangleq c\Pi - q\eta$.

\begin{lemma}
\label{lem:baseline_lqr}
The limit graphexon mean field game problem possesses a unique Nash solution if and only if the following coupled forward-backward ODE system admits a unique solution pair $(m_t, S_t)$ in $L^2(X, \nu)$
\begin{equation} \label{eq:abstract_system}
    \begin{cases}
        \dot{m}_t = (a_c \mathbf{I} + c \mathcal{G}) m_t - k S_t , \quad m_0(\alpha) = \int_{\mathbb{R}} y \mu_0(dy; \alpha)\\
        -\dot{S}_t = (a_c - \gamma) S_t + \psi \mathcal{G} m_t, \quad \lim_{t \to \infty} e^{-\gamma t} S_t = 0
    \end{cases}
\end{equation}
where $\mathbf{I}$ is the identity operator on $L^2(X, \nu)$. The initial mean state $m_0 \in L^2(X, \nu)$ is given. Furthermore, the optimal best response is given by the decentralized feedback law
\begin{equation}
    u_t^\alpha = -r^{-1}b (\Pi x_t^\alpha + S_t(\alpha)).
\end{equation}
\end{lemma}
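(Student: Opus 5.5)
The plan is the standard fixed-point argument of LQG mean field games, carried out label by label. We fix a candidate mean field trajectory $m \in C([0,\infty); L^2(X,\nu))$ with $\sup_t e^{-\gamma t/2}\|m_t\|_{L^2}<\infty$, treat the coupling signal $z_t(\alpha) \triangleq [\mathcal{G} m_t](\alpha)$ as given, and solve the resulting tracking problem for the representative agent $\alpha$. Because $\mathcal{G}$ is a bounded operator with $\|\mathcal{G}\|_{\mathcal{L}(L^2)}\le 1$ (it is an average of composition operators with measure-preserving maps), $z$ inherits the same growth bound. For fixed $\alpha$ the problem is a scalar discounted LQ tracking problem with a deterministic exogenous input $c z_t$ and reference $\eta z_t$.

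We use the quadratic ansatz $V(t,x)=\frac12\Pi x^2 + S_t(\alpha) x + \text{const}$. Substituting it into the discounted HJB equation $\gamma V - \partial_t V = \min_u\{\cdots\}$ and matching the $x^2$ terms gives \eqref{eq:baseline_are}. Matching the $x$ terms gives
\[
-\dot S_t = (a - k\Pi - \gamma) S_t + (c\Pi - q\eta) z_t ,
\]
which is the second line of \eqref{eq:abstract_system} with $\psi = c\Pi - q\eta$. The minimizer is $u=-r^{-1}b(\Pi x + S_t)$. Verification is done by completing the square in the cost along any admissible $u^\alpha\in\mathcal{U}$. The transversality condition $e^{-\gamma t}S_t\to 0$ selects the unique bounded-growth solution
\[
S_t=\int_t^\infty e^{-(a_c-\gamma)(s-t)}\psi z_s\,ds .
\]
This integral converges because the stabilizing property of $\Pi$ gives $a_c - \gamma/2 = -\sqrt{(a-\gamma/2)^2+kq}<0$, so $a_c-\gamma<-\gamma/2$ dominates the growth of $z$. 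Hence the best response to $m$ is unique and is given by the stated feedback law.

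Next we close the loop. Under the feedback law, the agent's state obeys a linear SDE whose drift is $(a_c x + c z_t - kS_t)$. Taking expectations, the independent Brownian noise drops out and the local mean satisfies $\dot m_t(\alpha) = a_c m_t(\alpha) + c[\mathcal{G}m_t](\alpha) - k S_t(\alpha)$. A trajectory $m$ is a Nash (consistent) mean field exactly when this regenerated mean coincides with the assumed one, which gives the first line of \eqref{eq:abstract_system}. The equivalence in both directions follows. A Nash solution yields a pair $(m,S)$ solving the system, and conversely any solution pair defines, via the feedback law, best responses whose mean reproduces $m$. Uniqueness therefore transfers between the two, since the best response to a given $m$ is unique and the map from a Nash equilibrium to $(m,S)$ is injective.

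The main obstacle is the measurability and integrability bookkeeping in $\alpha$. We must show that $S_t$ and $m_t$ stay in $L^2(X,\nu)$ and that the pointwise-in-$\alpha$ control problems assemble into an $L^2$-valued system. We handle this by working directly with the Bochner-integral representation of $S$ in $L^2(X,\nu)$, using the bound $\|\mathcal{G}\|\le1$ and the discount margin, and by checking that the admissible set $\mathcal{U}$ matches the growth class in which the transversality condition pins down $S$ uniquely.
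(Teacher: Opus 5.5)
Your proposal is correct in substance, but it reaches the best response by dynamic programming, whereas the paper uses the maximum principle. The paper's proof is only a citation: it applies the maximum principle to the frozen-$m$ tracking problem, so that $S_t$ arises from the adjoint equation, and it leaves the Riccati step, the consistency step and the equivalence with \eqref{eq:abstract_system} to the LQG--MFG literature. You instead posit $V=\frac12\Pi x^2+S_tx+\text{const}$, match coefficients in the discounted HJB equation, and verify by completing the square. Your $V_x=\Pi x+S_t$ is exactly the adjoint that the maximum principle produces under the affine decoupling ansatz, so both routes end at the same equations.

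What your route buys is global sufficiency and uniqueness of the best response in one step from $r>0$. You also make explicit the fixed-point map and the bijection between equilibria and solution pairs, which is what the ``if and only if'' and the transfer of uniqueness actually rest on. The maximum-principle route needs no guess for $V$ and matches the forward--backward form of \eqref{eq:abstract_system} directly. However, in infinite horizon its transversality condition is not automatically necessary, and sufficiency requires a separate convexity argument, which the paper does not supply.

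There are two things to repair when you write it out. First, your explicit formula has a sign slip: the solution of $-\dot S_t=(a_c-\gamma)S_t+\psi z_t$ is
\[
S_t=\int_t^\infty e^{(a_c-\gamma)(s-t)}\psi z_s\,ds,
\]
not the kernel $e^{-(a_c-\gamma)(s-t)}$. Your convergence argument is the correct one for this kernel.

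Second, the class $\sup_t e^{-\gamma t/2}\|m_t\|_{L^2}<\infty$ is borderline. For a signal $z_t$ of size exactly $e^{\gamma t/2}$, the discounted cost is typically infinite for every control, so ``best response'' loses its meaning. Use $\int_0^\infty e^{-\gamma t}\|m_t\|_{L^2}^2\,dt<\infty$ instead. Then $e^{-\gamma t/2}S_t$ is the convolution of an $L^2$ function with the $L^1$ kernel $e^{(a_c-\gamma/2)u}\mathbb{I}_{[0,\infty)}(u)$, so $S$ stays in the class. The boundary term $e^{-\gamma T}\mathbb{E}V(T,x_T)$ in your verification then vanishes along some $T_n\to\infty$ for every control with $\int_0^\infty e^{-\gamma t}\mathbb{E}|x_t|^2dt<\infty$.

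You are right that this growth class, and not the condition $e^{-\gamma t}S_t\to 0$ by itself, is what pins down $S$. The homogeneous solution $Ce^{(\gamma-a_c)t}$ also satisfies $e^{-\gamma t}S_t\to0$ whenever $0<a_c<\gamma/2$, and the Riccati equation does not exclude that case.
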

\begin{proof}
The proof follows standard linear-quadratic mean field game literature (see e.g., \cite{caines2025cdc_mean, gao2023lqg}). Applying the maximum principle to the local agent's optimal control problem yields the individual adjoint equation of $S_t$.
\end{proof}

\subsection{Algebraic Conditions for Mean Field Stability}

Let $E$ be the projection-valued measure defined on the Borel subsets of the real line, associated with the self-adjoint operator $\mathcal{G}$. The graphexon operator evaluates as $\mathcal{G} = \int_{\sigma(\mathcal{G})} \lambda \, \mathrm{d}E(\lambda)$. Let the spectral radius of $\mathcal{G}$ restricted to $L^2_0(X, \nu)$ be $\rho$. The spectral domain is $\lambda \in [-\rho, \rho]$.

We seek a bounded linear operator $\mathbf{P}(\mathcal{G})$ on $L^2(X, \nu)$, defined as
\begin{equation}
    \mathbf{P}(\mathcal{G}) = \int_{\sigma(\mathcal{G})} p(\lambda) \, \mathrm{d}E(\lambda)
\end{equation}
where $p(\lambda) \in \mathbb{R}$ acts as a spectral multiplier for each mode, ensuring
\[
    S_t = \mathbf{P}(\mathcal{G}) m_t.
\]
Differentiating this linear ansatz and substituting it into \eqref{eq:abstract_system} requires the following Scalar Algebraic Riccati Equation (SARE) to hold for all $\lambda \in \sigma(\mathcal{G})$
\begin{equation} \label{eq:nmare}
    -k p(\lambda)^2 + \left(2a_c - \gamma + \lambda c\right) p(\lambda) + \lambda \psi = 0.
\end{equation}

Define $a_\gamma \triangleq a_c - \gamma/2$. The discriminant associated with the SARE is
\begin{equation}
    \Delta(\lambda) \triangleq c^2 \lambda^2 + 4(a_\gamma c + k\psi)\lambda + 4a_\gamma^2.
\end{equation}

\begin{theorem}
\label{thm:existence_and_branch}
Assume $c \neq 0$. The parameterized SARE admits real-valued solutions $p(\lambda)$ for all $\lambda \in [-\rho, \rho]$ if and only if the system parameters satisfy the following two algebraic conditions
\begin{enumerate}
    \item $ c^2 \rho^2 + 4a_\gamma^2 \ge 4\rho |a_\gamma c + k\psi|$;
    \item Let $\lambda^* = -2(a_\gamma c + k\psi)/c^2$. If $|\lambda^*| \le \rho$, then $k\psi(2a_\gamma c + k\psi) \le 0$.
\end{enumerate}
Furthermore, if a solution $p(\lambda)$ renders the closed-loop state dynamics strictly stable, it must be
\begin{equation} \label{eq:p_lambda_branch}
    p(\lambda) = \frac{2a_\gamma + \lambda c + \sqrt{\Delta(\lambda)}}{2k}.
\end{equation}
\end{theorem}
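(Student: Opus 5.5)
The SARE \eqref{eq:nmare} is a quadratic in $p$ with leading coefficient $-k<0$ (note $k=b^2/r>0$ is needed; if $b=0$ the problem degenerates, so I take $k>0$). Its discriminant is $(2a_c-\gamma+\lambda c)^2+4k\lambda\psi=(2a_\gamma+\lambda c)^2+4k\psi\lambda$. Expanding gives exactly $\Delta(\lambda)=c^2\lambda^2+4(a_\gamma c+k\psi)\lambda+4a_\gamma^2$. Real solutions exist for every $\lambda\in[-\rho,\rho]$ if and only if $\Delta\ge 0$ on that interval. The plan is therefore to characterize nonnegativity of the upward parabola $\Delta$ (using $c\neq 0$, so $c^2>0$) on a symmetric interval.

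A convex quadratic is nonnegative on $[-\rho,\rho]$ if and only if two things hold: it is nonnegative at both endpoints, and, when its vertex $\lambda^*=-2(a_\gamma c+k\psi)/c^2$ lies in $[-\rho,\rho]$, its minimum value $\Delta(\lambda^*)$ is nonnegative. If the vertex lies outside the interval, $\Delta$ is monotone on the interval, so the endpoint conditions suffice.

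The endpoint conditions are $\Delta(\pm\rho)=c^2\rho^2+4a_\gamma^2\pm4\rho(a_\gamma c+k\psi)\ge0$. These combine into condition 1, $c^2\rho^2+4a_\gamma^2\ge4\rho|a_\gamma c+k\psi|$.

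For the vertex, compute
\[
\Delta(\lambda^*)=4a_\gamma^2-\frac{4(a_\gamma c+k\psi)^2}{c^2}=-\frac{4k\psi(2a_\gamma c+k\psi)}{c^2}.
\]
Hence $\Delta(\lambda^*)\ge0$ is equivalent to $k\psi(2a_\gamma c+k\psi)\le0$, which is condition 2. Both directions of the equivalence follow from this case split.

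For the branch selection, substitute $S_t=\mathbf P(\mathcal G)m_t$ into the forward equation of \eqref{eq:abstract_system}. Through the spectral calculus this gives, mode by mode, the closed-loop rate
\[
a_c+\lambda c-kp(\lambda)=\frac{\gamma}{2}\mp\tfrac12\sqrt{\Delta(\lambda)},
\]
using $a_c=a_\gamma+\gamma/2$. Here $p_\pm=(2a_\gamma+\lambda c\pm\sqrt{\Delta})/(2k)$.

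The minus-root choice yields rate $\gamma/2+\sqrt{\Delta}/2\ge\gamma/2>0$, so that mode diverges and the solution cannot be strictly stable. This holds on every spectral mode, in particular on any nonzero spectral subspace, which is nonempty since $\sigma(\mathcal G)\neq\emptyset$ and includes $\lambda=1$ on constants. Thus a strictly stable solution must select the plus root \eqref{eq:p_lambda_branch}.

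The main subtlety is interpreting "strictly stable" in the undiscounted versus discounted sense. The rate $\gamma/2-\sqrt{\Delta}/2$ shows that the plus branch is the only candidate; stability then additionally needs $\sqrt{\Delta}>\gamma$. I expect that condition to be treated in later results, so here I only claim necessity of the branch.

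A further step is needed at $\lambda=1$ on constants if $1>\rho$. The same argument applies there: the theorem's hypotheses concern $[-\rho,\rho]$, but the branch selection argument is pointwise in $\lambda$, so it covers $\lambda=1$ as well.
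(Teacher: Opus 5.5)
Your derivation of the two existence conditions is correct and matches the paper's argument. The discriminant of the SARE is $\Delta(\lambda)$, an upward parabola since $c\neq 0$. Nonnegativity on $[-\rho,\rho]$ reduces to the endpoint values, which give condition 1, and, when the vertex lies inside the interval, to $\Delta(\lambda^*)=-4k\psi(2a_\gamma c+k\psi)/c^2\ge 0$, which gives condition 2.

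The gap is in the branch selection, where you dropped a term. With $p_\pm=(2a_\gamma+\lambda c\pm\sqrt{\Delta(\lambda)})/(2k)$ and $a_c=a_\gamma+\gamma/2$, one gets
\[
a_c+\lambda c-kp_\pm(\lambda)=\tfrac12\bigl(\gamma+\lambda c\mp\sqrt{\Delta(\lambda)}\bigr),
\]
not $\tfrac12(\gamma\mp\sqrt{\Delta(\lambda)})$. So the minus root is not unstable on every mode: its rate $\tfrac12(\gamma+\lambda c+\sqrt{\Delta(\lambda)})$ can be negative once $\lambda c<-\gamma$, and your chosen witness $\lambda=1$ can fail.

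Concretely, the Riccati equation gives $\Delta(\lambda)=(\lambda c+2a-\gamma)^2+4kq(1-\eta\lambda)$. Take $q=0$, $a>\gamma$ and $c<\gamma-2a$. Then $\Delta\ge 0$ everywhere, so the theorem's conditions hold. Yet at $\lambda=1$ the minus root gives rate $\gamma-a<0$ and the plus root gives $a+c<0$. Hence stability at $\lambda=1$ does not distinguish the branches, and your closing remark that the argument covers $\lambda=1$ is wrong. For the same reason, the remaining stability requirement on the plus branch is $\sqrt{\Delta(\lambda)}>\gamma+\lambda c$, not $\sqrt{\Delta(\lambda)}>\gamma$.

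The repair is the paper's own step: evaluate at $\lambda=0\in[-\rho,\rho]$, where the coupling term vanishes. There the minus-root rate is $\tfrac12(\gamma+2|a_\gamma|)>0$, while $p_+(0)=(2a_\gamma+2|a_\gamma|)/(2k)=0$ because $a_\gamma<0$, which agrees with the uncoupled problem. More generally, the minus root is unstable on every mode with $\gamma+\lambda c>0$. What the argument needs is such a mode carrying nonzero spectral measure, and $\lambda=0$ is the natural choice given the spectral domain $[-\rho,\rho]$.
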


\begin{proof}
The SARE admits real-valued solutions if and only if its discriminant $\Delta(\lambda) = c^2\lambda^2 + 4(a_\gamma c + k\psi)\lambda + 4a_\gamma^2$ satisfies $\Delta(\lambda) \ge 0$ for all $\lambda \in [-\rho, \rho]$. 

As $c \neq 0$, $\Delta(\lambda)$ is a convex parabola. Non-negativity on the compact interval $[-\rho, \rho]$ is equivalent to the non-negativity of its values at the boundaries and its vertex (if contained in the interval):
\begin{enumerate}
    \item $\Delta(\pm \rho) \ge 0$ is equivalent to $c^2 \rho^2 + 4a_\gamma^2 \ge 4\rho |a_\gamma c + k\psi|$.
    \item The global minimum is $\Delta(\lambda^*)$ at $\lambda^* = -2(a_\gamma c + k\psi)/c^2$. If $|\lambda^*| \le \rho$, the condition $\Delta(\lambda^*) \ge 0$ simplifies to its own discriminant being non-positive: $16(a_\gamma c + k\psi)^2 - 16 c^2 a_\gamma^2 \le 0$, which factors into $k\psi(2a_\gamma c + k\psi) \le 0$.
\end{enumerate}

Next, let $p_{\pm}(\lambda) = \frac{1}{2k} (2a_\gamma + \lambda c \pm \sqrt{\Delta(\lambda)})$. Substituting $2a_\gamma = 2a_c - \gamma$ into the closed-loop generator $\mathcal{A}_{cl}(\lambda) = a_c + \lambda c - kp(\lambda)$ yields $ \mathcal{A}_{cl}^\pm(\lambda) = \frac{1}{2} (\gamma + \lambda c \mp \sqrt{\Delta(\lambda)})$.

Consider the negative branch $p_-(\lambda)$, which leads to $\mathcal{A}_{cl}^+(\lambda)$. At $\lambda = 0$, the stability condition $\mathcal{A}_{cl}^+(0) < 0$ requires $\gamma + \sqrt{4a_\gamma^2} < 0$. Since $\gamma > 0$ and $\sqrt{\Delta} \ge 0$, this is impossible. 

For the positive branch $p_+(\lambda)$, the generator is $\mathcal{A}_{cl}^-(\lambda)$. Evaluating at $\lambda = 0$ gives $p_+(0) = (2a_\gamma + 2|a_\gamma|)/2k$. Since $a_\gamma < 0$, $p_+(0) = 0$, which is the unique solution consistent with the unperturbed system.
\end{proof}

Let $\mathcal{A}_{cl}(\lambda)\triangleq \frac{1}{2} (\gamma + \lambda c - \sqrt{\Delta(\lambda)})$. Define $L: \mathbb{R} \to \mathbb{R}$ as
\begin{equation}
    L(\lambda) \triangleq ((\gamma - a)c + k q \eta)\lambda - a_c(a_c - \gamma).
\end{equation}
One can verify that $\mathcal{A}_{cl}(\lambda)<0$ is equivalent with $L(\lambda)<0$.

\begin{proposition}
\label{thm:complete_stability}
Let $\Sigma \triangleq [-\rho, \rho] \cup \{1\}$ denote the spectrum of $\mathcal{G}$. Assume the algebraic conditions in Theorem \ref{thm:existence_and_branch} hold. Define the non-negative half-space $\Omega \triangleq \{ \lambda \in \Sigma \mid \gamma + \lambda c \ge 0 \}$, and let $\mathcal{E} \triangleq \{1, -\rho, \rho, -\gamma/c\} \cap \Omega$.
Then the infinite-dimensional mean field system is globally asymptotically stable, that is, $\lim_{t \to \infty} \|m_t\|_{L^2} = 0$, if and only if $a_c < 0$ and, for every point $\lambda_e \in \mathcal{E}$, $L(\lambda_e)<0$.
\end{proposition}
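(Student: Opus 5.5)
Under the ansatz $S_t=\mathbf{P}(\mathcal{G})m_t$ with the branch \eqref{eq:p_lambda_branch} of Theorem \ref{thm:existence_and_branch}, the forward equation in \eqref{eq:abstract_system} becomes $\dot m_t=\mathcal{A}_{cl}(\mathcal{G})m_t$ with $\mathcal{A}_{cl}(\mathcal{G})=\int_\Sigma \mathcal{A}_{cl}(\lambda)\,\mathrm{d}E(\lambda)$. This generator is self-adjoint and bounded, because $\mathcal{A}_{cl}$ is continuous on the compact set $\Sigma$. The functional calculus therefore gives $m_t=\int_\Sigma e^{t\mathcal{A}_{cl}(\lambda)}\,\mathrm{d}E(\lambda)m_0$, and
\[
\|m_t\|_{L^2}^2=\int_\Sigma e^{2t\mathcal{A}_{cl}(\lambda)}\,\mathrm{d}\langle E(\lambda)m_0,m_0\rangle .
\]
Sufficiency: if $\sup_\Sigma\mathcal{A}_{cl}<0$, then $\|m_t\|\le e^{t\sup\mathcal{A}_{cl}}\|m_0\|\to0$. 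Necessity: suppose $\mathcal{A}_{cl}(\lambda_0)\ge 0$ for some $\lambda_0\in\Sigma$. The point $\lambda_0=1$ corresponds to the eigenfunction $\mathbf{1}$ (Lemma \ref{lem:spectral_properties}). For a point of $[-\rho,\rho]$, continuity of $\mathcal{A}_{cl}$ gives, for each $\varepsilon>0$, a spectral window of positive spectral measure on which $\mathcal{A}_{cl}>-\varepsilon$. Taking $m_0$ in that window, or $m_0=\mathbf{1}$, yields $\|m_t\|$ that either does not decay or, after refining the argument, fails to decay for a suitable $m_0$. For the pure non-decay claim, choose $m_0$ with $\mathcal{A}_{cl}\ge0$ on a set of positive spectral measure, or else pass to the closure and use a weighted superposition whose decay is slower than any fixed rate. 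I would state global asymptotic stability as $\sup_\Sigma\mathcal{A}_{cl}<0$ and justify this by the compactness of $\Sigma$. The problem therefore reduces to the scalar statement $\max_{\lambda\in\Sigma}\mathcal{A}_{cl}(\lambda)<0$.

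Next I reduce the maximum over $\Sigma$ to finitely many points. Off $\Omega$ we have $\gamma+\lambda c<0$, so $\mathcal{A}_{cl}(\lambda)\le\frac12(\gamma+\lambda c)<0$ automatically. Those $\lambda$ can be discarded, which explains the definition of $\Omega$. On $\Omega$ I use the stated equivalence $\mathcal{A}_{cl}(\lambda)<0\iff L(\lambda)<0$. I would verify it first: with $\gamma+\lambda c\ge0$, the inequality $\mathcal{A}_{cl}<0$ means $(\gamma+\lambda c)^2<\Delta(\lambda)$. Expanding, using $a_c-\gamma/2=a_\gamma$, $\psi=c\Pi-q\eta$, and the Riccati relation \eqref{eq:baseline_are} to eliminate $\Pi$, this should collapse to the affine inequality $L(\lambda)<0$ (the $c^2\lambda^2$ terms cancel). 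Since $L$ is affine, its maximum over each interval component of $\Omega$ is attained at an endpoint. The components of $\Omega$ are $[-\rho,\rho]$ cut by the threshold $-\gamma/c$, together with the isolated point $1$. Their endpoints all lie in $\mathcal{E}=\{1,-\rho,\rho,-\gamma/c\}\cap\Omega$. Hence $\mathcal{A}_{cl}<0$ on $\Sigma$ if and only if $L(\lambda_e)<0$ for every $\lambda_e\in\mathcal{E}$.

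The role of the condition $a_c<0$ is the subtle step. At the boundary point $\lambda=-\gamma/c$ (if it lies in $\Sigma$), strict negativity of $\mathcal{A}_{cl}$ needs $\Delta>0$ there, and the equivalence with $L$ degenerates at equality. Also, $p(0)=0$ in Theorem \ref{thm:existence_and_branch} uses $a_\gamma<0$. I expect to show that $a_c<0$ is exactly what makes $L(0)=-a_c(a_c-\gamma)<0$ consistent with the branch choice, so that the unperturbed mode is stable and the rest of the argument applies. Necessity then follows because $0$ lies in $[-\rho,\rho]\subset\Sigma$ with $\gamma>0$, so $0\in\Omega$, and $\mathcal{A}_{cl}(0)=\frac12(\gamma-2|a_\gamma|)$ is negative only if $a_c<0$. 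The main obstacle is this algebraic verification of the $L$-equivalence, including its sign cases, together with the rigorous necessity direction at the continuous-spectrum endpoints. I would handle the latter with spectral windows, possibly accepting "$\lim\|m_t\|=0$ for all $m_0$" and using the uniform boundedness principle to upgrade to uniform exponential decay.
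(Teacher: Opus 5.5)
Your main line is the paper's own proof. You discard $\Sigma\setminus\Omega$, where $\mathcal{A}_{cl}(\lambda)\le\frac12(\gamma+\lambda c)<0$. On $\Omega$ you use $\mathcal{A}_{cl}<0\iff L<0$. You reduce the affine $L$ to the extreme points collected in $\mathcal{E}$, and you get $a_c<0$ from the mode $\lambda=0\in\Omega$. The paper simply takes global asymptotic stability to mean $\mathcal{A}_{cl}(\lambda)<0$ for all $\lambda\in\Sigma$, and under that reading your argument is complete.

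Two of your worries go away on inspection. First, the identity
\[ \Delta(\lambda)=(\gamma+\lambda c)^2-4L(\lambda) \]
uses only $a_c=a-k\Pi$ and $\psi=c\Pi-q\eta$; the Riccati equation is not needed. It gives the $L$-equivalence on all of $\Omega$, including points with $\gamma+\lambda c=0$, so nothing degenerates there. Second, since $a_\gamma<0$ you have $\mathcal{A}_{cl}(0)=a_c$, so $a_c<0$ is just $L(0)<0$. That is already implied by the conditions on $\mathcal{E}$, because $0$ lies in the interval $\Omega\cap[-\rho,\rho]$, whose endpoints belong to $\mathcal{E}$.

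Where your plan genuinely fails is the attempt to get the strict inequality from the literal definition, $\lim_{t\to\infty}\|m_t\|_{L^2}=0$ for every $m_0$. A point with $\mathcal{A}_{cl}(\lambda_0)>0$ does give a growing trajectory, by continuity and $E((\lambda_0-\epsilon,\lambda_0+\epsilon))\neq0$. But suppose $\max_\Sigma\mathcal{A}_{cl}=0$ is attained only at points of zero spectral mass. Then dominated convergence in $\|m_t\|^2=\int e^{2t\mathcal{A}_{cl}(\lambda)}\,\mathrm{d}\langle E(\lambda)m_0,m_0\rangle$ gives $\|m_t\|\to0$ for every $m_0$.

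Concretely, work on the torus, where $\mathcal{G}$ has no eigenvalues in $L^2_0$. Take $a_c<0$, $\rho|c|\le\gamma$, and $\eta$ tuned so that $L(\lambda)=-\Theta(1+\lambda/\rho)$; the hypotheses of the existence theorem then hold by the identity above. Every trajectory decays, yet $L(-\rho)=0$, so the ``only if'' direction fails under the literal reading.

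Neither of your fallbacks closes this. A superposition decaying slower than any exponential still decays. The uniform boundedness principle only yields $\sup_{t\ge0}\|e^{t\mathcal{A}_{cl}(\mathcal{G})}\|=\sup_{t\ge0}e^{t\max_\Sigma\mathcal{A}_{cl}}<\infty$, i.e.\ $\max_\Sigma\mathcal{A}_{cl}\le0$, not $<0$: strong stability does not imply exponential stability. The equivalence therefore holds only when global asymptotic stability is read as uniform stability, $\|e^{t\mathcal{A}_{cl}(\mathcal{G})}\|\to0$. By the functional calculus this is exactly $\max_\Sigma\mathcal{A}_{cl}<0$. You should adopt that definition outright, as the paper tacitly does, rather than try to derive it from pointwise decay.
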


\begin{proof}
Global asymptotic stability requires the closed-loop generator to satisfy $\mathcal{A}_{cl}(\lambda) < 0$ for all spectral modes $\lambda \in \Sigma$. Substituting the positive branch solution $p(\lambda)$, this condition is equivalent to $\gamma + \lambda c < \sqrt{\Delta(\lambda)}$.

For any $\lambda \in \Sigma \setminus \Omega$, the term $\gamma + \lambda c<0$ and $\mathcal{A}_{cl}(\lambda)$ is clearly less than zero.

For any $\lambda \in \Omega$, both sides of the inequality are non-negative. One can verify that $\mathcal{A}_{cl}(\lambda)<0$ is equivalent with $L(\lambda)<0$. The supremum of $L$ is attained at the extreme points of $\Omega$, which are contained in the finite set $\mathcal{E}$. Therefore, $L(\lambda) < 0$ holds for all $\lambda \in \Omega$ if and only if $L(\lambda_e) < 0$ for all $\lambda_e \in \mathcal{E}$.

Since $\gamma > 0$, the base mode $\lambda = 0$ is contained in $\Omega$. The condition $L(0) = -a_c(a_c - \gamma) < 0$ requires $a_c < 0$ due to $a_\gamma=a_c-\gamma/2<0$, completing the proof.
\end{proof}

\subsection{Turing-Type Topological Instability on the Torus}

In classical reaction-diffusion systems, \textit{Turing instability} refers to the phenomenon where an equilibrium, which is stable without spatial coupling, becomes asymptotically unstable under spatial coupling \cite{turing1990chemical,nakao2010turing}. Within the present graphexon MFG framework, this mechanism is generalized to a topological setting over the continuous spectrum of the underlying expander graph structure.

Let $\lambda = 1$ denote the isolated principal eigenvalue of $\mathcal{G}$ associated with the mean state $\bar{m}_t$ defined by
\[
    \bar{m}_t \triangleq \int_{X} m_t(\alpha) \nu(d\alpha).
\]
The spatial deviation field is defined accordingly by
\[
    e_t(\alpha) \triangleq m_t(\alpha) - \bar{m}_t.
\]
The dynamic evolution of the error field on the zero-mean subspace $L^2_0(X, \nu)$ is governed by the closed-loop generator $\dot{e}_t = \mathcal{A}_{cl}(\mathcal{G}) e_t$, where $\mathcal{A}_{cl}(\mathcal{G}) \triangleq \int_{-\rho}^{\rho} \mathcal{A}_{cl}(\lambda) \, \mathrm{d}E(\lambda)$. 

Let $\Sigma_0 = [-\rho, \rho]$ denote the continuous spectrum of $\mathcal{G}$ restricted to $L^2_0(X, \nu)$. 

\begin{definition}[Turing-Type Topological Instability]
\label{def:turing_instability}
The graphexon mean field game system undergoes a \textit{Turing-type topological instability} if the uncoupled local dynamics (i.e., $c=0$) are stable, the average state $\bar{m}_t$ remains stable, i.e., $\mathcal{A}_{cl}(1) < 0$, whereas $e_t$ diverges such that $ \sup_{\lambda \in \Sigma_0} \mathcal{A}_{cl}(\lambda) > 0$.
\end{definition}

To analyze the Turing-type instability, the uncoupled dynamics (i.e., $c=0$) must be stable, which requires $a_c < 0$ from the proof of Proposition \ref{thm:complete_stability}.

Define $\Theta \triangleq a_c(a_c - \gamma) > 0$ and the function $\Phi(c) \triangleq (\gamma - a)c + k q \eta$. Define the bifurcation thresholds
\[
    c^+ \triangleq \frac{\Theta/\rho - k q \eta}{\gamma - a}, \quad c^- \triangleq \frac{-\Theta/\rho - k q \eta}{\gamma - a}.
\]

\begin{lemma}
\label{lem:monotone_generator}
The closed-loop generator $\mathcal{A}_{cl}(\lambda)$ is weakly monotonic with respect to $\lambda\in[-\rho, \rho]$. Consequently, the Turing-type instability condition is equivalent to $ \max\{\mathcal{A}_{cl}(\rho), \mathcal{A}_{cl}(-\rho)\} > 0$.
\end{lemma}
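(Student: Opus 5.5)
The plan is to differentiate $\mathcal{A}_{cl}(\lambda)=\frac12\bigl(\gamma+\lambda c-\sqrt{\Delta(\lambda)}\bigr)$ on $[-\rho,\rho]$ and show that the sign of the derivative never changes. By Theorem \ref{thm:existence_and_branch}, $\Delta\ge 0$ on this interval, so $\mathcal{A}_{cl}$ is real-valued and continuous there. Write $B\triangleq a_\gamma c+k\psi$, so that $\Delta(\lambda)=c^2\lambda^2+4B\lambda+4a_\gamma^2$. Wherever $\Delta(\lambda)>0$ we have
\[
2\sqrt{\Delta}\,\mathcal{A}_{cl}'(\lambda)=c\sqrt{\Delta(\lambda)}-(c^2\lambda+2B).
\]
Hence the sign of $\mathcal{A}_{cl}'$ is the sign of $c\sqrt{\Delta}-(c^2\lambda+2B)$.

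The key algebraic identity comes from completing the square:
\[
c^2\Delta(\lambda)-(c^2\lambda+2B)^2=4(a_\gamma^2c^2-B^2)=-4k\psi(2a_\gamma c+k\psi).
\]
This is exactly the quantity that appears in condition 2 of Theorem \ref{thm:existence_and_branch}. I would then split into two cases according to its sign.

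\emph{Case (i):} $k\psi(2a_\gamma c+k\psi)\le 0$. Then $|c|\sqrt{\Delta}\ge|c^2\lambda+2B|$, so $c\sqrt{\Delta}-(c^2\lambda+2B)$ has the sign of $c$, or vanishes, for every $\lambda$. Thus $\mathcal{A}_{cl}$ is nondecreasing if $c>0$ and nonincreasing if $c<0$.

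\emph{Case (ii):} $k\psi(2a_\gamma c+k\psi)>0$. Condition 2 of Theorem \ref{thm:existence_and_branch} then forces $|\lambda^*|>\rho$. The linear function $c^2\lambda+2B=c^2(\lambda-\lambda^*)$ therefore has a constant, nonzero sign on $[-\rho,\rho]$. Moreover $|c^2\lambda+2B|>|c|\sqrt{\Delta}$, so the derivative has the sign of $-(c^2\lambda+2B)$, which is constant. In this case $\Delta>0$ throughout, so differentiability is not an issue.

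In case (i), the points where $\Delta=0$ form a finite set, at most two points. Monotonicity on each subinterval, combined with continuity of $\mathcal{A}_{cl}$, gives weak monotonicity on all of $[-\rho,\rho]$. I expect this bookkeeping to be the only delicate step, together with recognizing that condition 2 is precisely what rules out a sign change of $c^2\lambda+2B$ inside the interval.

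Finally, a weakly monotone continuous function on the compact interval $\Sigma_0=[-\rho,\rho]$ attains its supremum at an endpoint. Hence $\sup_{\lambda\in\Sigma_0}\mathcal{A}_{cl}(\lambda)=\max\{\mathcal{A}_{cl}(\rho),\mathcal{A}_{cl}(-\rho)\}$. Substituting this into Definition \ref{def:turing_instability}, the divergence requirement $\sup_{\Sigma_0}\mathcal{A}_{cl}>0$ becomes $\max\{\mathcal{A}_{cl}(\rho),\mathcal{A}_{cl}(-\rho)\}>0$, as claimed.
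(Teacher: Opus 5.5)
Your proof is correct and follows essentially the paper's route: the paper also reduces the vanishing of $\frac{d}{d\lambda}\mathcal{A}_{cl}$ to $k\psi(2a_\gamma c+k\psi)$ through the identity $c^2\Delta(\lambda)-(c^2\lambda+2B)^2=-4k\psi(2a_\gamma c+k\psi)$. Your explicit case split is more careful than the paper's one-line argument, since it handles the degenerate case $k\psi(2a_\gamma c+k\psi)=0$ (where the derivative vanishes on a whole subinterval) and the points where $\Delta=0$; one small correction is that in case (ii) you only get $\Delta>0$ on the open interval $(-\rho,\rho)$, because $\Delta(\pm\rho)=0$ is possible when condition 1 holds with equality, but the continuity argument you already use in case (i) covers this.
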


\begin{proof}
One can easily verify that the equation $\frac{d}{d\lambda}\mathcal{A}_{cl}(\lambda) = 0$ is equivalent to $k\psi(2a_\gamma c + k\psi) = 0$. Thus, the derivative $\frac{d}{d\lambda}\mathcal{A}_{cl}(\lambda)$ maintains the same sign across the interior of the spectral interval $(-\rho, \rho)$. Therefore, the supremum of $\mathcal{A}_{cl}(\lambda)$ on $[-\rho, \rho]$ is attained at the spectral boundaries $\lambda \in \{-\rho, \rho\}$, establishing the equivalence.
\end{proof}

Define the spatial instability manifold $\mathcal{I} \triangleq \mathcal{I}_0 \setminus \{0\}$ where $\mathcal{I}_0 \triangleq \{ c \in \mathbb{R} \mid \max\{\mathcal{A}_{cl}(\rho), \mathcal{A}_{cl}(-\rho)\} > 0 \}$. 

\begin{theorem}\label{thm:exact_bifurcation_interval}
$\mathcal{I}_0$ is determined by the following domains:
\begin{enumerate}
    \item If $\gamma > a$, then $ \mathcal{I}_0 = \left( -\infty, \, \min\left(\frac{\gamma}{\rho}, c^-\right) \right) \cup \left( \max\left(-\frac{\gamma}{\rho}, c^+\right), \, \infty \right)$.
    
    \item If $\gamma < a$, then $\mathcal{I}_0 = \left(-\frac{\gamma}{\rho}, \, c^+ \right) \cup \left( c^-, \, \frac{\gamma}{\rho}\right)$.
    \item If $\gamma = a$ and
    \begin{enumerate}
        \item $\rho k |q \eta| \le \Theta$, then $\mathcal{I}_0 = \emptyset$.
        \item $\rho k q \eta > \Theta$, then $\mathcal{I}_0 = \left(-\frac{\gamma}{\rho}, \infty\right)$.
        \item $\rho k q \eta < -\Theta$, then $\mathcal{I}_0 = \left(-\infty, \frac{\gamma}{\rho}\right)$.
    \end{enumerate}
\end{enumerate}
\end{theorem}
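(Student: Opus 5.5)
The plan is to reduce the condition $\max\{\mathcal{A}_{cl}(\rho),\mathcal{A}_{cl}(-\rho)\}>0$ to sign conditions on affine functions of $c$. By Lemma \ref{lem:monotone_generator}, $c\in\mathcal{I}_0$ if and only if $\mathcal{A}_{cl}(\lambda)>0$ for some $\lambda\in\{\rho,-\rho\}$. For a fixed $\lambda$ we have $\mathcal{A}_{cl}(\lambda)>0$ exactly when $\gamma+\lambda c>\sqrt{\Delta(\lambda)}$. Since the right-hand side is nonnegative, this requires $\gamma+\lambda c>0$, i.e.\ $\lambda$ lies in the open half-space version of $\Omega$. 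On that region we square both sides and use the equivalence stated before Proposition \ref{thm:complete_stability}, namely that $\mathcal{A}_{cl}(\lambda)<0$ iff $L(\lambda)<0$. I will rederive that equivalence briefly to confirm that strict positivity corresponds to $L(\lambda)>0$. Indeed, $(\gamma+\lambda c)^2-\Delta(\lambda)$ expands to a multiple of $L(\lambda)$: using $a_\gamma=a_c-\gamma/2$ and $\psi=c\Pi-q\eta$, the $c^2\lambda^2$ terms cancel, and $k\Pi=a-a_c$ turns the linear coefficient into $4\big((\gamma-a)c+kq\eta\big)\lambda$. The constant becomes $\gamma^2-4a_\gamma^2=-4a_c(a_c-\gamma)$. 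Hence $(\gamma+\lambda c)^2-\Delta(\lambda)=4L(\lambda)=4(\lambda\Phi(c)-\Theta)$. With this computation in hand,
\[
\mathcal{A}_{cl}(\pm\rho)>0 \iff \gamma\pm\rho c>0 \ \text{and}\ \pm\rho\,\Phi(c)>\Theta .
\]

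Next I will solve each pair of linear inequalities in $c$. For $\lambda=\rho$, the condition is $c>-\gamma/\rho$ together with $\Phi(c)>\Theta/\rho$. For $\lambda=-\rho$, the condition is $c<\gamma/\rho$ together with $\Phi(c)<-\Theta/\rho$. Since $\Phi(c)=(\gamma-a)c+kq\eta$, the split into cases follows the sign of the slope $\gamma-a$.

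If $\gamma>a$, then $\Phi(c)>\Theta/\rho$ iff $c>c^+$, and $\Phi(c)<-\Theta/\rho$ iff $c<c^-$. This gives $(\max(-\gamma/\rho,c^+),\infty)\cup(-\infty,\min(\gamma/\rho,c^-))$.

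If $\gamma<a$, dividing by the negative slope reverses both inequalities. The $\lambda=\rho$ branch becomes $c\in(-\gamma/\rho,c^+)$ and the $\lambda=-\rho$ branch becomes $c\in(c^-,\gamma/\rho)$. These intervals are understood as empty when the endpoints are out of order, which matches the stated formula.

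If $\gamma=a$, then $\Phi\equiv kq\eta$ is constant and the conditions become $\rho kq\eta>\Theta$ (with $c>-\gamma/\rho$) or $\rho kq\eta<-\Theta$ (with $c<\gamma/\rho$). Because $\Theta>0$, these are mutually exclusive, and neither holds when $\rho k|q\eta|\le\Theta$. This yields subcases (a)--(c).

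The main obstacle is the reduction in the first paragraph. One subtlety is confirming that the expansion of $(\gamma+\lambda c)^2-\Delta(\lambda)$ really collapses to $4L(\lambda)$; this needs the Riccati relation only through $k\Pi=a-a_c$, and I will check it carefully. The other is the boundary behaviour. At $\gamma+\lambda c=0$ we get $\mathcal{A}_{cl}=-\tfrac12\sqrt{\Delta}\le0$, so the endpoint $\mp\gamma/\rho$ is correctly excluded and the intervals are open. I will also rely on Theorem \ref{thm:existence_and_branch} holding, so that $\Delta(\pm\rho)\ge0$ and $\mathcal{A}_{cl}$ is real-valued.
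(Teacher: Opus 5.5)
Your proposal is correct and follows the paper's proof essentially step for step: reduce $\mathcal{A}_{cl}(\pm\rho)>0$ to $\gamma\pm\rho c>0$ together with $\pm\rho\,\Phi(c)>\Theta$, then split into cases on the sign of $\gamma-a$. You add an explicit check that $(\gamma+\lambda c)^2-\Delta(\lambda)=4L(\lambda)$ and a treatment of the endpoints $\gamma\pm\rho c=0$, which the paper leaves as ``one can verify''; also, since $\mathcal{I}_0$ is defined directly through $\mathcal{A}_{cl}(\pm\rho)$, the appeal to Lemma~\ref{lem:monotone_generator} is not actually needed.
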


\begin{proof}
By Lemma \ref{lem:monotone_generator}, spatial instability is equivalent to $\mathcal{A}_{cl}(\rho) > 0$ or $\mathcal{A}_{cl}(-\rho) > 0$. 

The condition $\mathcal{A}_{cl}(\rho) > 0$ reduces to $\gamma + \rho c > \sqrt{\Delta(\rho)}$, which is satisfied if and only if $c > -\gamma/\rho$ and $\Phi(c) > \Theta/\rho$. By symmetry, $\mathcal{A}_{cl}(-\rho) > 0$ is satisfied if and only if $c < \gamma/\rho$ and $-\Phi(c) > \Theta/\rho$.

Case 1 ($\gamma > a$). The union of the solution sets directly yields the two disjoint rays for $\mathcal{I}_0$.

Case 2 ($\gamma < a$). The conditions resolve to $c \in (-\gamma/\rho, c^+)$ and $c \in (c^-, \gamma/\rho)$, respectively. Since $\Theta > 0$ and $\gamma - a < 0$, it follows that $c^+ - c^- = \frac{2\Theta/\rho}{\gamma - a} < 0$, which guarantees $c^+ < c^-$. The union of these domains generates the stated disjoint bounded intervals.

Case 3 ($\gamma = a$). Here, $\Phi(c) = k q \eta$. The quadratic constraints simplify to $\rho k q \eta > \Theta$ and $-\rho k q \eta > \Theta$. Because $\Theta > 0$, at most one inequality holds. 
If $\rho k |q \eta| \le \Theta$, both inequalities fail, yielding $\mathcal{I}_0 = \emptyset$.
If $\rho k q \eta > \Theta$, the first inequality holds, restricting instability to $c > -\gamma/\rho$, while the second fails unconditionally, yielding $\mathcal{I}_0 = \left(-\frac{\gamma}{\rho}, \infty\right)$.
If $\rho k q \eta < -\Theta$, the second inequality holds, restricting instability to $c \le \gamma/\rho$, while the first fails unconditionally, yielding $\mathcal{I}_0 = \left(-\infty, \frac{\gamma}{\rho}\right)$.
\end{proof}

Define the mean state stability manifold $\mathcal{S}_{1} \triangleq \{ c \in \mathbb{R} \mid \mathcal{A}_{cl}(1) < 0 \}$. Define the mean state bifurcation threshold $c^* \triangleq \frac{\Theta - k q \eta}{\gamma - a}$.

\begin{theorem}
The mean state stability manifold $\mathcal{S}_{1}$ is determined by the following domains.
\begin{enumerate}
    \item If $\gamma > a$, then $\mathcal{S}_{1} = \left(-\infty, \, \max\left(-\gamma, c^*\right)\right)$.
    \item If $\gamma < a$, then $\mathcal{S}_{1} = (-\infty, -\gamma) \cup (c^*, \infty)$.
    \item If $\gamma = a$, the stability depends on the parameter gap
    \begin{enumerate}
        \item If $k q \eta < \Theta$, then $\mathcal{S}_{1} = \mathbb{R}$.
        \item If $k q \eta \ge \Theta$, then $\mathcal{S}_{1} = (-\infty, -\gamma)$.
    \end{enumerate}
\end{enumerate}
Consequently, a Turing-type instability occurs when $c \in \mathcal{S}_{1} \cap \mathcal{I}$ and $a_c<0$.
\end{theorem}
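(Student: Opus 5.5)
The argument specializes the reduction from the proof of Theorem \ref{thm:exact_bifurcation_interval} to the single mode $\lambda=1$. Since $\mathcal{A}_{cl}(1)=\frac{1}{2}(\gamma+c-\sqrt{\Delta(1)})$, we have $\mathcal{A}_{cl}(1)<0$ if and only if $\gamma+c<\sqrt{\Delta(1)}$. If $\gamma+c<0$, this holds automatically, because $\sqrt{\Delta(1)}\ge 0$; here $\Delta(1)\ge0$ is guaranteed by the standing real-solvability hypotheses of Theorem \ref{thm:existence_and_branch}, extended to $\Sigma=[-\rho,\rho]\cup\{1\}$ as in Proposition \ref{thm:complete_stability}. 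So $(-\infty,-\gamma)\subset\mathcal{S}_1$ in every case.

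If $\gamma+c\ge0$, both sides are non-negative and we may square. We then invoke the equivalence stated after Theorem \ref{thm:existence_and_branch}, that $\mathcal{A}_{cl}(\lambda)<0 \iff L(\lambda)<0$ on $\Omega$. At $\lambda=1$ this reads $L(1)=\Phi(c)-\Theta<0$, i.e. $(\gamma-a)c+kq\eta<\Theta$. I would double-check this by expanding $(\gamma+c)^2<\Delta(1)$. Using $\psi=c\Pi-q\eta$, $a_c=a-k\Pi$ and $a_\gamma=a_c-\gamma/2$, the $c^2$ terms cancel and the $c\Pi$ terms cancel against $-kc\Pi$. What remains is the linear inequality
\[
\gamma c-a c+kq\eta<-a_c^2+\gamma a_c .
\]
Its right-hand side is $-(a_c^2-\gamma a_c)=-\Theta$ with the sign convention of $L$. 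I would recheck carefully whether the correct form is $\Phi(c)<\Theta$ by matching with the $\rho$-version used in Theorem \ref{thm:exact_bifurcation_interval}, where instability was $\Phi(c)>\Theta/\rho$. By analogy, stability at $\lambda=1$ is $\Phi(c)<\Theta$, and I will take that form.

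Next comes the case split on the sign of $\gamma-a$, intersecting $\{c\ge-\gamma\}$ with $\{\Phi(c)<\Theta\}$ and adding the ray $(-\infty,-\gamma)$.
\begin{itemize}
\item If $\gamma>a$, the condition is $c<c^*$. Hence $\mathcal{S}_1=(-\infty,-\gamma)\cup([-\gamma,\infty)\cap(-\infty,c^*))=(-\infty,\max(-\gamma,c^*))$.
\item If $\gamma<a$, dividing by $\gamma-a<0$ flips the inequality to $c>c^*$. This gives $(-\infty,-\gamma)\cup([-\gamma,\infty)\cap(c^*,\infty))$, which one writes as $(-\infty,-\gamma)\cup(c^*,\infty)$. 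I will need to note that this is exact when $c^*\ge-\gamma$ and is otherwise read as the union covering everything; it is presumably stated under that implicit convention.
\item If $\gamma=a$, $\Phi\equiv kq\eta$. So $kq\eta<\Theta$ gives all of $[-\gamma,\infty)$, hence $\mathbb{R}$, while $kq\eta\ge\Theta$ leaves only $(-\infty,-\gamma)$.
\end{itemize}

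The concluding statement is then definitional. By Definition \ref{def:turing_instability}, Turing-type instability requires three things: stable uncoupled dynamics ($a_c<0$, by the $c=0$ argument in Proposition \ref{thm:complete_stability}), $\mathcal{A}_{cl}(1)<0$ (i.e. $c\in\mathcal{S}_1$), and $\sup_{\Sigma_0}\mathcal{A}_{cl}>0$. By Lemma \ref{lem:monotone_generator} the last condition is $c\in\mathcal{I}_0$, and the requirement $c\neq0$ gives $\mathcal{I}$. Intersecting these yields the claim. The main obstacle is the algebraic reduction of $(\gamma+c)^2<\Delta(1)$ to the linear condition $\Phi(c)<\Theta$, in particular keeping track of the sign of $\Theta$ and the $\Pi$ cancellations via the Riccati equation \eqref{eq:baseline_are}. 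A secondary point is the boundary bookkeeping at $c=-\gamma$, where $\gamma+c=0$ and the squared form must be checked directly.
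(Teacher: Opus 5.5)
Your route is the same as the paper's: reduce $\mathcal{A}_{cl}(1)<0$ to ``$c<-\gamma$ or $\Phi(c)<\Theta$'' and split on the sign of $\gamma-a$. Your case analysis and your definitional reading of the final sentence match it. The one step that fails as written is your double-check. It has a sign slip and lands on $\Phi(c)<-\Theta$. You then adopt $\Phi(c)<\Theta$ by analogy with Theorem \ref{thm:exact_bifurcation_interval}, which is not a justification. Only $a_c=a-k\Pi$ and $\psi=c\Pi-q\eta$ are needed; the Riccati equation plays no role:
\[
4a_\gamma c+4kc\Pi-2\gamma c=4(a-\gamma)c,\qquad 4a_\gamma^2-\gamma^2=(2a_\gamma-\gamma)(2a_\gamma+\gamma)=4a_c(a_c-\gamma)=4\Theta,
\]
hence
\[
\Delta(1)=(\gamma+c)^2+4\bigl(\Theta-\Phi(c)\bigr)=(\gamma+c)^2-4L(1).
\]
So the right-hand side of your linear inequality is $a_c^2-\gamma a_c=+\Theta$. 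For $\gamma+c\ge 0$ you then get $\mathcal{A}_{cl}(1)<0\iff\Phi(c)<\Theta$ by direct computation. This also checks the paper's ``one can verify'' claim $\mathcal{A}_{cl}(\lambda)<0\iff L(\lambda)<0$ at $\lambda=1$, instead of merely citing it.

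The same identity settles your two side worries. At $c=-\gamma$ it gives $\mathcal{A}_{cl}(1)=-\tfrac12\sqrt{\Delta(1)}<0\iff L(1)<0$, so that boundary point simply belongs to the $\Phi(c)<\Theta$ branch. On $\{\Phi(c)<\Theta\}$ it gives $\Delta(1)>(\gamma+c)^2\ge 0$, so real solvability at $\lambda=1$ only has to be assumed on the ray $c<-\gamma$. Finally, case 2 needs no implicit convention. Since $\{c<-\gamma\}\cup\bigl(\{c\ge-\gamma\}\cap\{\Phi(c)<\Theta\}\bigr)=\{c<-\gamma\}\cup\{\Phi(c)<\Theta\}$, the set $(-\infty,-\gamma)\cup(c^*,\infty)$ is exact for every ordering of $c^*$ and $-\gamma$; it equals $\mathbb{R}$ when $c^*<-\gamma$.
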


\begin{proof}
The strict inequality condition $\mathcal{A}_{cl}(1) < 0$ reduces to satisfying either $c < -\gamma$ or $\Phi(c) < \Theta$. Evaluating this combined logic condition across the three cases defined by the sign of $\gamma - a$ directly yields the specified disjoint intervals.
\end{proof}

To discuss the specific properties brought by the continuous spectrum of $\mathcal{G}$, we now restrict our analysis to the affine graphexon framework established in Section \ref{sec:margulis}. To isolate the spatial profile of the instability from its exponential growth, we define the normalized deviation field
\[
    \tilde{e}_t(\alpha) \triangleq \frac{e_t(\alpha)}{ \|e_t\|_{L^2}}.
\]

In classical models with compact operators, the existence of discrete $L^2$-eigenfunctions dictates that $\tilde{e}_t$ converges strongly to a stationary spatial pattern (i.e., $\tilde{e}_t \to \phi$ in $L^2$ for some eigenfunction $\phi$). In contrast, the graphexon operator $\mathcal{G}$ in Section \ref{sec:margulis} possesses a continuous spectrum on $L^2_0(X, \nu)$ and admits no discrete eigenfunctions. The following theorem demonstrates that  $\tilde{e}_t $ converges weakly to zero.

Assume the Turing-type instability condition holds such that $\mu_{\max} \triangleq \sup_{\lambda \in \Sigma_0} \mathcal{A}_{cl}(\lambda) > 0$. Let $\lambda^* \in \Sigma_0$ be the dominant spectral mode evaluating to $\mu(\lambda^*) = \mu_{\max}$.

\begin{theorem}
Assume that the initial spatial deviation $e_0 \in L^2_0(\mathbb{T}^2, \nu)$ is not strictly orthogonal to the unstable spectrum in the sense that
\[
    \| E((\lambda^* - \epsilon, \lambda^* + \epsilon)) e_0 \|_{L^2} > 0 \quad \text{for any } \epsilon > 0.
\]
Then, for any test function $\phi \in L^2_0(\mathbb{T}^2, \nu)$, it holds that $ \lim_{t \to \infty} \langle \tilde{e}_t, \phi \rangle_{L^2} = 0$.
\end{theorem}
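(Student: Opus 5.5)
The proof works through the spectral theorem, using the fact established in Theorem \ref{thm:continuous_expander} that $\mathcal{G}$ has no point spectrum on $L^2_0(\mathbb{T}^2,\nu)$. Write $e_t = \int_{\Sigma_0} e^{t\mathcal{A}_{cl}(\lambda)}\,\mathrm{d}E(\lambda)e_0$. For $f,g\in L^2_0$ let $\mu_{f,g}(B)=\langle E(B)f,g\rangle$ be the spectral measures. Then
\[
\langle e_t,\phi\rangle = \int_{\Sigma_0} e^{t\mathcal{A}_{cl}(\lambda)}\,\mathrm{d}\mu_{e_0,\phi}(\lambda),\qquad \|e_t\|_{L^2}^2=\int_{\Sigma_0} e^{2t\mathcal{A}_{cl}(\lambda)}\,\mathrm{d}\mu_{e_0,e_0}(\lambda).
\]
Since there is no point spectrum, $E(\{\lambda\})=0$ for every $\lambda$. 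Hence $\mu_{e_0,e_0}$ has no atoms, and neither does the complex measure $\mu_{e_0,\phi}$, because $|\mu_{e_0,\phi}(B)|\le \mu_{e_0,e_0}(B)^{1/2}\mu_{\phi,\phi}(B)^{1/2}$. The target is to show that the ratio of $|\langle e_t,\phi\rangle|$ to $\|e_t\|_{L^2}$ tends to zero, i.e.\ that the numerator is negligible compared with the denominator.

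Fix $\epsilon>0$ and split $\Sigma_0$ into $U_\epsilon=\{\lambda:\mathcal{A}_{cl}(\lambda)>\mu_{\max}-\epsilon\}$ and its complement. By Cauchy--Schwarz in the spectral representation,
\[
|\langle e_t,\phi\rangle| \le \Big(\int_{U_\epsilon} e^{2t\mathcal{A}_{cl}}\,\mathrm{d}\mu_{e_0,e_0}\Big)^{1/2}\|E(U_\epsilon)\phi\| + e^{t(\mu_{\max}-\epsilon)}\|e_0\|\,\|\phi\|.
\]
The first term is at most $\|e_t\|\,\|E(U_\epsilon)\phi\|$. For the second term, continuity of $\mathcal{A}_{cl}$ (which holds because $\Delta\ge 0$ under the conditions of Theorem \ref{thm:existence_and_branch}) together with the nondegeneracy hypothesis on $e_0$ gives a lower bound $\|e_t\|\ge e^{t(\mu_{\max}-\epsilon/2)}\,\|E(U_{\epsilon/2})e_0\|$ with $\|E(U_{\epsilon/2})e_0\|>0$. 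Indeed, $U_{\epsilon/2}$ contains an interval $(\lambda^*-\delta,\lambda^*+\delta)\cap\Sigma_0$, and the hypothesis is exactly that $E$ of such an interval does not annihilate $e_0$. Consequently the second term divided by $\|e_t\|$ decays like $e^{-t\epsilon/2}\to0$. This leaves
\[
\limsup_{t\to\infty}|\langle\tilde e_t,\phi\rangle|\le\|E(U_\epsilon)\phi\|.
\]

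The main obstacle is to let $\epsilon\to0$. The sets $U_\epsilon$ decrease to the maximizer set $\{\lambda\in\Sigma_0:\mathcal{A}_{cl}(\lambda)=\mu_{\max}\}$, so $\|E(U_\epsilon)\phi\|$ tends to the norm of $E$ of that set applied to $\phi$. By Lemma \ref{lem:monotone_generator}, $\mathcal{A}_{cl}$ is monotone on $[-\rho,\rho]$, with derivative of constant sign in the interior. If it is strictly monotone, the maximizer set is the single point $\lambda^*=\pm\rho$, and $E(\{\lambda^*\})=0$ by the absence of eigenfunctions, which finishes the proof. The degenerate case is where the derivative vanishes identically, i.e.\ $k\psi(2a_\gamma c+k\psi)=0$, so that $\mathcal{A}_{cl}$ is constant. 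I would argue that this case is incompatible with the standing Turing hypothesis: a constant $\mathcal{A}_{cl}$ on $\Sigma_0$, whose extension is affine or a square-root expression agreeing at $\lambda=1$, contradicts $\mathcal{A}_{cl}(1)<0<\mu_{\max}$. The plan is to check this via the explicit formula, using $\psi=0$ (so $\mathcal{A}_{cl}(\lambda)=\frac12(\gamma+\lambda c-|2a_\gamma+\lambda c|)$, which is not constant when $c\neq0$) or $k\psi=-2a_\gamma c$. If that check fails, the fallback is a finer ratio argument on the atomless measure near a nondegenerate maximizer. Once the maximizer set is shown to be finite, hence $E$-null, the continuity of measure gives $\|E(U_\epsilon)\phi\|\to0$, which completes the proof.
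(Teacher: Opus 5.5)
\textbf{Gap: the degenerate case.} Splitting by the superlevel sets $U_\epsilon$ is correct. It is essentially the paper's argument made more careful. The paper splits off a ball $I_\delta$ around $\lambda^*$ and simply asserts that the modes in $I_\delta^c$ grow at a rate strictly below $\mu_{\max}$. That tacitly assumes $\lambda^*$ is the unique maximizer of $\mathcal{A}_{cl}$ on $\Sigma_0$. Your version makes the rate gap automatic and moves the whole difficulty into showing that the maximizer set is $E$-null. In the nondegenerate case $k\psi(2a_\gamma c+k\psi)\neq0$ this works as you say.

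The step that fails is your treatment of the degenerate case $k\psi(2a_\gamma c+k\psi)=0$. There the discriminant of $\Delta$ vanishes, so $\sqrt{\Delta(\lambda)}=|c\lambda-c\lambda_0|$ with $\lambda_0=-2(a_\gamma c+k\psi)/c^2$. Hence $\mathcal{A}_{cl}$ is piecewise linear: it equals the constant $\frac12(\gamma+c\lambda_0)$ on the half-line $\{c\lambda\ge c\lambda_0\}$ and has slope $c$ on the other side. The derivative vanishes on a half-line, not identically. So the dichotomy ``strictly monotone or constant'' that you read off Lemma~\ref{lem:monotone_generator} is false. Observing that $\mathcal{A}_{cl}$ is not constant when $\psi=0$ does not make the maximizer set finite.

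\textbf{The statement is false in that case.} No finer ratio argument can close the gap. Keep $a_c<0$, take $q>0$ and $\eta=c\Pi/q$ (so $\psi=0$), and choose $c<0$ with $\rho|c|>2|a_\gamma|$. Then $\mathcal{A}_{cl}(\lambda)=a_c+c\lambda$ where $c\lambda+2a_\gamma\le0$, and $\mathcal{A}_{cl}(\lambda)=\gamma-a_c$ where $c\lambda+2a_\gamma\ge0$. Consequently:
\begin{itemize}
\item $\mathcal{A}_{cl}(1)=a_c+c<0$, so the mean state is stable;
\item $\mathcal{A}_{cl}\equiv\gamma-a_c=\mu_{\max}>0$ on the whole interval $J=[-\rho,\,2|a_\gamma|/c]$, which has positive length;
\item the conditions of Theorem~\ref{thm:existence_and_branch} hold trivially, since $\Delta$ is a perfect square.
\end{itemize}
Let $e_0=\phi$ be any nonzero element of the range of $E(J)$ on $L^2_0$; this range is nonzero because $J$ has nonempty interior in $\Sigma_0$. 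Pick $\lambda^*$ in the support of the spectral measure of $e_0$, so the hypothesis of the statement holds. But $e_t=e^{t(\gamma-a_c)}e_0$, so $\tilde e_t\equiv e_0/\|e_0\|_{L^2}$ and $\langle\tilde e_t,\phi\rangle_{L^2}=\|e_0\|_{L^2}\neq0$ for all $t$.

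The other degenerate branch, $k\psi=-2a_\gamma c$, is harmless: there $\mathcal{A}_{cl}\le a_c<0$ on $\Sigma_0$, so no instability occurs.

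Your proof therefore needs an additional hypothesis, as the paper's does, for example $\psi\neq0$ or uniqueness of the maximizer of $\mathcal{A}_{cl}$ on $\Sigma_0$. With such a hypothesis your argument is complete.
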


\begin{proof}
Let $E$ denote the projection-valued measure associated with the self-adjoint Margulis operator $\mathcal{G}$. Because $\mathcal{G}$ possesses a continuous spectrum on $L^2_0(\mathbb{T}^2, \nu)$, the corresponding scalar spectral measure $\mu_\phi(\cdot) \triangleq \langle E(\cdot)\phi, \phi \rangle_{L^2}$ is strictly non-atomic for any test function $\phi \in L^2_0(\mathbb{T}^2, \nu)$. 

Let $I_\delta = (\lambda^* - \delta, \lambda^* + \delta) \cap \Sigma_0$ be a $\delta$-neighborhood centered at the dominant spectral mode. Applying the orthogonal decomposition of the projection-valued measure yields the inner product relation
\[
    \langle \tilde{e}_t, \phi \rangle_{L^2} = \langle \tilde{e}_t, E(I_\delta)\phi \rangle_{L^2} + \langle \tilde{e}_t, E(I_\delta^c)\phi \rangle_{L^2}.
\]
The energy component distributed over the complement set $I_\delta^c$ grows at an exponential rate strictly bounded below the maximal rate $\mu_{\max}$. By the spectral measure assumption on $e_0$, the total norm $\|e_t\|_{L^2}$ is exponentially dominated by the modes within $I_\delta$. Consequently, under the continuous normalization factor $\|e_t\|_{L^2}$, the inner product term corresponding to $I_\delta^c$ vanishes asymptotically as $t \to \infty$. 

By the Cauchy-Schwarz inequality, the absolute value of the limit supremum is strictly bounded by $\|E(I_\delta)\phi\|_{L^2}$. Because the spectral measure is non-atomic, taking the limits iteratively yields $\lim_{\delta \to 0} \|E(I_\delta)\phi\|_{L^2} = \|E(\{\lambda^*\})\phi\|_{L^2} = 0$. This establishes the weak convergence of the normalized field to zero.
\end{proof}
\section{Numerical Simulation}\label{sec:simulation}

In this section, we provide a numerical simulation to illustrate the asymptotic consensus behavior and the onset of spatial instability. The finite agent network is constructed using a discrete Margulis expander graph on a $40 \times 40$ grid ($N = 1600$), generated by eight affine transformations. The fixed system parameters are $a = -1$, $b = 1$, $q = 2$, $r = 1$, $\gamma = 0.5$, and $\Gamma = 2$.

Based on the algebraic conditions established in Theorem \ref{thm:complete_stability}, we examine two representative drift coupling values: $c_{stable}$ from the strict stability region, and $c_{turing}$ from the Turing instability region. In the latter regime, the global mean is stable ($L(1) < 0$), but the spatial stability condition is violated on the continuous spectrum $[-\rho, \rho]$. Initial states $x_0^\alpha$ are generated using a low-frequency two-dimensional sine wave with superimposed Gaussian noise. The closed-loop dynamics are computed over the interval $t \in [0, 3]$.

\begin{figure}[htbp]
    \centering

    \begin{subfigure}{0.48\textwidth}
        \centering
        \includegraphics[width=\linewidth]{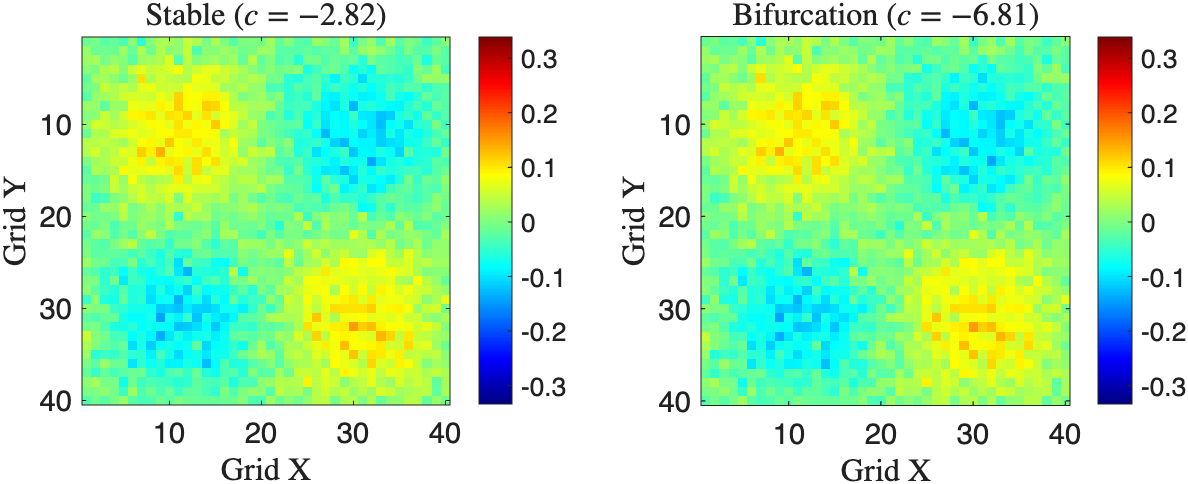}
        \caption{Spatial Evolution on Margulis Graph ($t = 0$)}
        \label{fig:spatial_t0}
    \end{subfigure}
    \hfill
    \begin{subfigure}{0.48\textwidth}
        \centering
        \includegraphics[width=\linewidth]{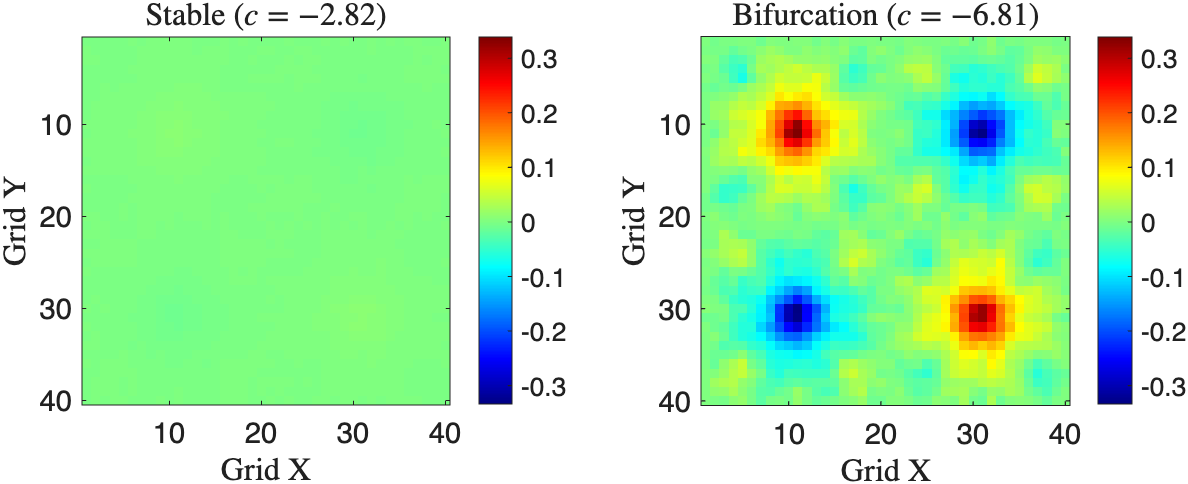}
        \caption{Spatial Evolution on Margulis Graph ($t = 3$)}
        \label{fig:spatial_t3}
    \end{subfigure}

    \caption{Spatial distribution of agent states on the $40 \times 40$ grid. Under $c_{stable}$, the system reaches a homogeneous spatial consensus. Under $c_{turing}$, the field fragments into high-frequency spatial oscillations.}
    \label{fig:spatial_patterns}
\end{figure}

Figure \ref{fig:spatial_patterns} illustrates the spatial distribution of the agent states. For $c = c_{stable}$, the initial spatial perturbations decay over time, leading to a flat, homogeneous field. Conversely, for $c = c_{turing}$, the spatial field develops a fragmented pattern. As time progresses, energy transfers to the high-frequency spectral modes determined by the discrete Margulis group action. The resulting distribution exhibits rapid spatial oscillations between neighboring grid points, which is consistent with the theoretical property that the unstable modes of the graphexon operator lack smooth $L^2$ eigenfunctions.

\section{Conclusion}\label{sec:conclusion}

This paper formulates and solves infinite-horizon discounted MFGs on expander graphs. The primary contribution extends Lubotzky's algebraic framework for finite expander graphs to the continuous graphexon limit. The weak convergence of empirical graph measures to a limit graphexon measure is established and a scalar LQG MFG on the limit space is formulated. Furthermore, the exact algebraic stability boundaries and Turing-type topological instability is analyzed.
A key finding is that the uniform spectral gap of expander graph sequence constructed in this paper is retained in the limit graph object. This guarantees a non-vanishing safe operating range for the coupling parameter $c$, confirming that system consensus and stabilization are attainable under strictly sparse, constant-degree communication constraints.

Future research could incorporate the Laplace-Beltrami operator—induced by microscopic spatial Brownian motion—into the macroscopic forward-backward equations. This resulting reaction-diffusion-expander system will introduce high-frequency dissipation, maintaining spatial regularity for the large-scale multi-agent MFG systems.

\bibliography{ref}  

\end{document}